\theoremstyle{plain}
 \newtheorem{thm}{Theorem}[section]
 \newtheorem{prop}{Proposition}[section]
 \newtheorem{lem}{Lemma}[section]
\theoremstyle{definition}
\theoremstyle{rem}
 \newtheorem{rem}{Remark}[section]
 \numberwithin{equation}{section}
\renewcommand{\le}{\leqslant}\renewcommand{\ge}{\geqslant}
\renewcommand{\leq}{\leqslant}\renewcommand{\geq}{\geqslant}
\def\N{{\mathbb N}}
\def\R{{\mathbb R}}
\title[Arithmetic functions at factorial arguments]{Arithmetic functions at factorial arguments}
\subjclass[2020]{Primary 11A25; Secondary 11N37}
\keywords{arithmetic functions, factorials}
\author{\bfseries Jean-Marie De Koninck} 
\address{ 
D\'epartement de math\'ematiques \\ 
Universit\'e Laval   \\ 
Qu\'ebec\\
Canada}
\email{jmdk@mat.ulaval.ca}
\author{\bfseries William Verreault}
\address{ 
Department of Mathematics \\ 
University of Toronto   \\ 
Toronto\\
Canada}
\email{william.verreault@utoronto.ca}
\thanks{Supported by the Natural Sciences and Engineering Research Council of Canada.} 
\begin{document}

{\begin{flushleft}\baselineskip9pt\scriptsize
MANUSCRIPT
\end{flushleft}}
\vspace{18mm} \setcounter{page}{1} \thispagestyle{empty}

\begin{abstract}
For various arithmetic functions $f:\N \to \R$, the behavior of $f(n!)$ and that of $\sum_{n\le N} f(n!)$ can be intriguing. For instance, for some functions $f$, we have ${f(n!)=\sum_{k\le n}f(k)}$, for others, we have ${f(n!)=\sum_{p\le n}f(p)}$ (where the sum runs over all the primes $p\le n$). Also, for some $f$, their minimum order coincides with $\lim_{n\to \infty}f(n!)$, for others, it is their maximum order that does so.
Here, we elucidate such phenomena and more generally,  we embark on a study of $f(n!)$ and of $\sum_{n\le N}f(n!)$ for a wide variety of arithmetical functions\,$f$. In particular, letting $d(n)$ and $\sigma(n)$ stand respectively for the number of positive divisors of $n$ and the sum of the positive divisors of $n$, we obtain new accurate asymptotic expansions for $d(n!)$ and $\sigma(n!)$. Furthermore, setting $\rho_1(n):=\max\{d\mid n:d\le \sqrt n\}$ and observing that no one has yet obtained an asymptotic value for $\sum_{n\le N} \rho_1(n)$ as $N\to \infty$, we show how one can obtain the asymptotic value of $\sum_{n\le N} \rho_1(n!)$.
\end{abstract}

\maketitle

\section{Introduction} \label{sec:intro}
It is common to enquire about the average order of an arithmetic function $f:\N \to \R$ by first examining the behavior of the sum
$\sum_{n\le N} f(n)$
for large $N$. What if we replace the argument $n$ by $n!$\,? More precisely, given an arithmetic function $f$, how does $f(n!)$ behave for large $n$  and what is the asymptotic value of $\sum_{n\le N} f(n!)$ as $N\to \infty$\,? These are natural questions to ask. Yet, in the literature, there seems to be only a handful of results regarding the study of $f(n!)$ and that of $\sum_{n\le N}f(n!)$ for classical arithmetic functions $f$. One of the first such functions to have been closely examined for its exact order at factorial arguments is the number of positive divisors function $d(n)$. In fact, Ramanujan \cite{ram2, ram1} obtained interesting estimates for $d(n!)$, and later, Erd\H{o}s, Graham, Ivi\'c, and Pomerance \cite{egip} refined his findings, whereas more recently, Jakimczuk \cite{jakim,jakim2} obtained estimates for $\sigma(n!)$, where $\sigma(n)$ stands for the sum  of the positive divisors of $n$.

Here, more generally, we embark on a study of $f(n!)$ and of $\sum_{n\le N} f(n!)$ for a wide variety of classical arithmetic functions $f$. Besides the functions $d(n)$ and $\sigma(n)$ just mentioned, we will pay particular attention to the arithmetic functions
\begin{eqnarray*}
&& \omega(n):=\sum_{p\mid n} p,\quad \Omega(n):=\sum_{p^\alpha\|n} \alpha, \quad \beta(n):=\sum_{p\mid n} p, \quad B(n):=\sum_{p^\alpha\|n}\alpha p,\\
& &  B_1(n):=\sum_{p^\alpha\|n}p^\alpha,\quad \gamma(n):=\prod_{p\mid n} p, \quad \eta(n):=\eta(p_1^{a_1}\cdots p_r^{a_r})=a_1\cdots a_r,
\end{eqnarray*}
the Euler totient function $\phi$ and finally the {\it middle divisors} functions
$$\rho_1(n):=\max\{d\mid n: d\le \sqrt n\}\qquad \mbox{and} \qquad \rho_2(n):=\min\{d\mid n: d\ge \sqrt n\}.$$

In Section \ref{sec:middle}, we examine the case of the middle divisors of $n!$. Then, preliminary results and statements regarding additive and multiplicative functions are given in Sections \ref{sec:additive} and \ref{sec:multiplicative}. In Section \ref{sec:compare}, for various arithmetic functions $f$, we compare the behavior of $f(n!)$ with the maximal order of $f(n)$ or their minimal order. The proofs of the results regarding additive and multiplicative functions are laid out in Section \ref{sec:proofs}.
We conclude with Section \ref{sec:chowla} where we show that the counterpart at factorial arguments of the famous Chowla conjecture holds and we examine the behavior of various similar sums.

As we will see, several of the  estimates that we obtain are almost straightforward whereas others are non trivial  and in fact give way to new results, in particular to new asymptotic expansions for $d(n!)$, $\sigma(n!)$ and $\eta(n!)$.

From here on, the letter $p$ is reserved for primes while the letter $c$ with or without subscript stands for an explicit constant, but not necessarily the same at each occurrence. Also, we let $\pi(x)$ stand for the number of primes not exceeding $x$. Finally, given an arithmetic function $f$, we will often refer to the sum
$$S_f(n):=\sum_{2\le m\le n} f(n).$$

\section{The middle divisors of $n!$} \label{sec:middle}
Before we examine the cases of additive and multiplicative functions, which are somewhat easier to study, we shall first consider a case of arithmetic functions which are neither additive nor multiplicative. We are interested in the middle divisors $\rho_1(n)$ and $\rho_2(n)$ of a positive integer $n$, which are defined as
$$\rho_1(n)  :=  \max\{ d\mid n: d\le \sqrt n\} \qquad \mbox{and} \qquad \rho_2(n)  :=  \min\{ d\mid n: d\ge \sqrt n\}.$$
In particular, $\rho_1(n)\le \sqrt n \le \rho_2(n)$ and $\rho_1(n)\rho_2(n)=n$.

In 1976, Tenenbaum \cite{tenenbaum} proved that
\begin{equation}\label{eq:rho2}
\sum_{n\leq N}\rho_{2}(n)=\dfrac{\pi^{2}}{12}\dfrac{N^{2}}{\log N}\Big(1+O\Big(\dfrac{1}{\log N}\Big)\Big)
\end{equation}
and that for any given $\varepsilon >0$, there exists $N_{0}=N_{0}(\varepsilon)$ such that for all $N\geq N_{0}$,
\begin{equation}\label{eq:rho1}
\dfrac{N^{3/2}}{(\log N)^{\delta+\varepsilon}}<\sum_{n\leq N}\rho_{1}(n)\ll \dfrac{N^{3/2}}{(\log N)^{\delta}(\log \log N)^{1/2}},
\end{equation}
where $\displaystyle{\delta=1-\dfrac{1+\log \log 2}{\log 2}\approx 0.086071}$.

In 2020, De Koninck and Razafindrasoanaivolala \cite{dkr} generalised estimate (\ref{eq:rho2}) by establishing that, given any real number $a>0$ and any positive integer $k$,
\begin{eqnarray*}
\sum_{n\leq N}\rho_{2}(n)^{a}
=c_{0}\dfrac{N^{a+1}}{\log N}+c_{1}\dfrac{N^{a+1}}{\log^{2}N}+\cdots
+c_{k-1}\dfrac{N^{a+1}}{\log^{k}N}+O\Big(\dfrac{N^{a+1}}{\log^{k+1}N}\Big),
\end{eqnarray*}
where, for $\ell=0,1,\ldots,k-1$, the constants $c_{\ell}=c_{\ell}(a)$ are explicitly given.
They also proved that, given any integer $k\geq 1$ and any real number $r> -1$,
$$
\sum_{n\leq N}\dfrac{\rho_{2}(n)}{\rho_{1}(n)^{r}}=h_{0}\dfrac{N^{2}}{\log N}+h_{1}\dfrac{N^{2}}{\log^{2}N}+\cdots
+h_{k-1}\dfrac{N^{2}}{\log^{k}N}+O\left(\dfrac{N^{2}}{\log^{k+1}N}\right)
$$
where $h_{0}=\dfrac{\zeta(r+2)}{2}$ and for each $1\leq \ell\leq k-1$,
$$
h_{\ell}=\left(\dfrac{r+2}{2}\right)c_{\ell}+\sum_{\nu=0}^{\ell-1}\dfrac{rc_{\nu}}{2}\prod_{m=\nu}^{\ell-1}\left(\dfrac{m+1}{2}\right),$$
with, for each $\nu=0,1,\ldots,\ell$,
$$c_{\nu}=\dfrac{\nu!}{(r+2)^{\nu+1}}\sum_{j=0}^{\nu}\dfrac{(r+2)^{j}(-1)^{j}\zeta^{(j)}(r+2)}{j!}.$$

In 2008, Ford \cite{ford} considerably improved (\ref{eq:rho1}) by showing that
$$
\sum_{n\leq N}\rho_{1}(n)\asymp \dfrac{N^{3/2}}{(\log N)^{\delta}(\log \log N)^{3/2}}.
$$
Interestingly, no asymptotic formula for the sum $\sum_{n\leq N}\rho_{1}(n)$ has yet been found.

Recently, De Koninck and Razafindrasoanaivolala \cite{jm-arthur-2023} established that
$$
\sum_{\substack{4\le  n\le N \\ n\neq \text{prime}}} \frac{\log \rho_2(n)}{\log \rho_1(n)} = N \log \log N + O(N)
$$
and also that, for all $N$ sufficiently large,
$$
c_1\, N < \sum_{2\le n\le N } \frac{\log \rho_1(n)}{\log \rho_2(n)} < c_2\, N,
$$
where
\begin{eqnarray*}
c_1 & = &  1-\log 2 + \int_1^2 \frac{1-\log u}{u(u+1)}\,du + \int_3^\infty \frac{u-1}{u+1} \frac{\rho(u-1)}u\,du \approx 0.528087, \\
c_2 & = & 2-2\log 2 \approx 0.613706.
\end{eqnarray*}

As we will now see, the behavior of each of the six sums
\begin{equation} \label{eq:liste}
\sum_{n\le N} \rho_1(n!), \sum_{n\le N} \rho_2(n!),  \sum_{n\leq N}\dfrac{\rho_{1}(n!)}{\rho_{2}(n!)},  \sum_{n\leq N}\dfrac{\rho_{2}(n!)}{\rho_{1}(n!)}, \sum_{2\le n\le N} \frac{\log \rho_1(n!)}{\log \rho_2(n!)},   \sum_{2\le n\le N} \frac{\log \rho_2(n!)}{\log \rho_1(n!)}
\end{equation}
is more manageable.

\begin{thm} \label{thm:middle-1}
Letting $c$ be the positive constant appearing below estimate (\ref{eq:tij}),  we have
$$
\sum_{n\le N} \rho_1(n!) =  \sqrt{N!}\Big(1 + O \Big(\frac 1{N^c}\Big)\Big) \quad \mbox{ and }\quad
\sum_{n\le N} \rho_2(n!)   =  \sqrt{N!}\Big(1 + O \Big(\frac 1{N^c}\Big)\Big).
$$
\end{thm}

\begin{thm} \label{thm:middle-2}
Letting $c$ be as above, we have
$$
\sum_{n\le N} \frac{\rho_1(n!)}{\rho_2(n!)}  =  N + O \left( N^{1-c} \right) \quad\mbox{ and }\quad
\sum_{n\le N} \frac{\rho_2(n!)}{\rho_1(n!)}  =  N + O \left( N^{1-c} \right).
$$
\end{thm}

\begin{thm} \label{thm:middle-3}
We have
$$
\sum_{2\le n\le N} \frac{\log \rho_1(n!)}{\log \rho_2(n!)} =  N + O(1)\quad\mbox{ and }\quad
\sum_{2\le n\le N} \frac{\log \rho_2(n!)}{\log \rho_1(n!)}  =  N + O(1) .
$$
\end{thm}

Observe that the main difficulty in finding an estimate for any of the sums appearing  in (\ref{eq:liste}) at arguments $n$ (and not $n!$) is that the values of the function $\rho_1(n)$ (resp. $\rho_2(n)$) are spread out over the interval $[1,\sqrt n]$ (resp. $[\sqrt n,n]$). On the contrary, the values of $\rho_1(n!)$ and  $\rho_2(n!)$ are all near $\sqrt{n!}$, as is made clear in the following proposition.

\begin{prop}\label{prop:d1d2}
To any given sufficiently large integer $n$, one can associate two positive integers $d_1<d_2$ both dividing $n!$ and satisfying
\begin{equation} \label{eq:close-1}
\Big(1- \frac 1{n^c} \Big) \sqrt{n!} < d_1 < \sqrt{n!} < d_2 < \Big(1+ \frac 2{n^c} \Big)\sqrt{n!},
\end{equation}
where $c$ is the constant appearing below estimate (\ref{eq:tij}),
thereby implying that
\begin{equation} \label{eq:close-2}
\Big(1- \frac 1{n^c}\Big) \sqrt{n!} < \rho_1(n!) < \sqrt{n!} < \rho_2(n!) <\Big(1+ \frac 2{n^c} \Big) \sqrt{n!}.
\end{equation}
\end{prop}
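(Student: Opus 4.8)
The plan is to reduce the proposition to the existence of a single pair of divisors of $n!$ straddling $\sqrt{n!}$ at fine scale, and then to construct such a pair by tuning the exponents in the prime factorisation of $n!$. Write $n!=\prod_{p\le n}p^{a_p}$ with $a_p=v_p(n!)$, so that $\sqrt{n!}=\prod_{p\le n}p^{a_p/2}$ and every divisor $d=\prod_{p\le n}p^{b_p}$ with $0\le b_p\le a_p$ satisfies $\log(d/\sqrt{n!})=\sum_{p\le n}(b_p-a_p/2)\log p$. Thus producing $d_1<\sqrt{n!}<d_2$ as in \eqref{eq:close-1} amounts to approximating the target $\tfrac12\log n!$ from below and from above by such sums to within an additive error $O(n^{-c})$. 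Granting \eqref{eq:close-1}, estimate \eqref{eq:close-2} is immediate: since $\rho_1(n!)$ is by definition the largest divisor of $n!$ not exceeding $\sqrt{n!}$ and $\rho_2(n!)$ the smallest divisor at least $\sqrt{n!}$, we get $d_1\le\rho_1(n!)\le\sqrt{n!}\le\rho_2(n!)\le d_2$, and the outer inequalities are strict because $n!$ is not a perfect square (by Bertrand's postulate there is a prime $p\in(n/2,n]$, and such a prime has $a_p=1$).

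To build $d_1$ and $d_2$, I would first separate the primes into a coarse block and a fine block. The fine block consists of the primes $p\in(n/2,n]$, each of which satisfies $a_p=1$, so in any divisor such a prime is simply present or absent; list them as $q_1<q_2<\cdots<q_k$ with $k=\pi(n)-\pi(n/2)$. The coarse block consists of the remaining primes $p\le n/2$, whose large exponents leave ample room. The strategy is: (i) choose the exponents of the coarse primes, together with an initial segment $q_1,\dots,q_j$ of the fine primes, so as to land on a divisor $D\le\sqrt{n!}$ with $\sqrt{n!}$ within a controlled factor above $D$; (ii) fine-tune by \emph{swapping consecutive large primes}, replacing a factor $q_i$ present in the current divisor by the adjacent absent prime $q_{i+1}$. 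Each such swap multiplies the divisor by $q_{i+1}/q_i=1+(q_{i+1}-q_i)/q_i$, and since $q_i>n/2$, estimate~\eqref{eq:tij} bounds the gap $q_{i+1}-q_i$ and hence shows that every swap changes the divisor by a factor $1+O(n^{-c})$, with $c$ the constant defined there. Sliding the boundary prime upward through $q_{j+1},q_{j+2},\dots$ therefore moves the divisor upward in steps of relative size $O(n^{-c})$, and I would continue until the value first exceeds $\sqrt{n!}$; the last divisor below and the first above are the desired $d_1$ and $d_2$.

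The main obstacle is to guarantee that the fine steps never leave a gap larger than $1+O(n^{-c})$ in the crucial neighbourhood of $\sqrt{n!}$, and in particular that the divisor $D$ of step (i) can be placed close enough to $\sqrt{n!}$ that the subsequent single-prime slides reach across it before the available large primes are exhausted. A single sliding sweep of the top fine prime only covers a multiplicative range up to $q_k/q_j\le 2$, whereas changing the number $j$ of fine primes, or a coarse exponent, jumps by a factor of order $n$; so the real work is to interleave coarse adjustments (using the small primes $p\le n/2$, whose exponents $a_p$ are large enough to realise essentially any small-integer ratio $(m+1)/m$ as a legitimate move between divisors) with the fine prime-swaps, so that the resulting chain of accessible divisor values is $1+O(n^{-c})$-dense throughout an interval containing $\sqrt{n!}$. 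Once that density near $\sqrt{n!}$ is established via \eqref{eq:tij}, taking $d_1$ to be the largest accessible value below $\sqrt{n!}$ and $d_2$ the next one above yields \eqref{eq:close-1}, and I would finally check that the constants $1$ and $2$ in the stated bounds come out correctly after substituting the explicit gap estimate from \eqref{eq:tij}.
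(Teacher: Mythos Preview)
Your proposal rests on a misreading of estimate~\eqref{eq:tij}. Tijdeman's theorem bounds the gap between consecutive $B$-\emph{smooth} integers (integers all of whose prime factors are at most $B$); it says nothing whatsoever about gaps between consecutive \emph{primes}. The $q_i$ in your fine block are primes in $(n/2,n]$, hence as far from smooth as possible, and \eqref{eq:tij} gives no control over $q_{i+1}-q_i$. Your swap steps therefore do not inherit any $1+O(n^{-c})$ increment from \eqref{eq:tij}; whatever bound one has on those prime gaps comes from an entirely different source, and in particular does not produce the specific Tijdeman--Langevin constant $c$ named in the proposition. More seriously, even granting small swap steps, you yourself identify the ``main obstacle'': a single sweep of large-prime swaps covers only a multiplicative range $\le 2$, while adjacent coarse configurations jump by factors of order $n$, and you never actually bridge this. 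The sentence ``Once that density near $\sqrt{n!}$ is established via~\eqref{eq:tij}'' is precisely the whole proof, and it is not carried out.

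The paper uses the same coarse/fine split $n!=s_1 s_2$ with $s_1=\prod_{n/2<p\le n}p$, but in the opposite way: it \emph{fixes} the fine part $s_1$ once and for all and looks for divisors of the form $r\cdot s_1$ with $r\mid s_2$. The point is that every such $r$ is an $(n/2)$-smooth integer, and it is to the sequence of smooth integers that Tijdeman's theorem~\eqref{eq:tij} legitimately applies: consecutive smooth numbers near $\sqrt{n!}/s_1$ differ by a factor $1+O\big((\log\sqrt{n!})^{-c}\big)=1+O(n^{-c})$, and multiplying by $s_1$ produces the required divisors $d_1<\sqrt{n!}<d_2$ of~\eqref{eq:close-1}. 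In short, the density near $\sqrt{n!}$ is supplied by the \emph{coarse} (smooth) part via Tijdeman, not by manoeuvring the large primes; your intuition that ``small primes $\dots$ realise essentially any small-integer ratio $(m+1)/m$'' is exactly the content of Tijdeman's theorem, and the paper invokes it directly rather than trying to rebuild it by hand.
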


\subsection*{Numerical data}
In the proof of Proposition \ref{prop:d1d2} stated in Section \ref{sec:mid1}, we will create the numbers $d_1=d_1(n)$ alluded to in \eqref{eq:close-1} so that $d_1\le \rho_1(n)$. Interestingly, one can check that $d_1=\rho_1(n)$ in the cases $n=7,10,22$. These are perhaps the only ones with that particular property. In the particular cases $n=10,20,30,40$,  we obtain the following data.
\vskip 5pt

\noindent
\begin{footnotesize}
\noindent
\begin{tabular}{|c|c|c|c|c|}\hline
$n$ & $n!$         & $d_1$ & $\lfloor \sqrt{n!} \rfloor$ & $\lfloor \sqrt{n!} \rfloor/d_1$ \\ \hline  \hline
10  & 3\,628\,800                 & 1890  & 1904 & 1.007407407    \\
    & $2^8\cdot 3^4\cdot 5^2\cdot 7$ & $2\cdot 3^3 \cdot 5\cdot 7$ & &  \\ \hline
20 & 2\,432\,902\,008\,176\,640\,000 & 1\,558\,878\,750 &  1\,559\,776\,268 & 1.000575746    \\
   & $2^{18}\cdot 3^8\cdot 5^4\cdot 7^2\cdot 11\cdot 13\cdot 17\cdot 19$ & $2\cdot 3^3\cdot 5^4\cdot 11\cdot 13\cdot 17\cdot 19$ & &  \\ \hline
\end{tabular}
\end{footnotesize}
\vskip 5pt

\noindent
When $n=30$,
\begin{eqnarray*}
n! & = & 265\,252\,859\,812\,191\,058\,636\,308\,480\,000\,000\\
 & & = 2^{26}\cdot 3^{14}\cdot 5^7\cdot 7^4\cdot 11^2\cdot 13^2\cdot 17\cdot 19\cdot 23\cdot 29,\\
d_1 & = & 16\,283\,616\,779\,520\,000 = 2^{11}\cdot 3^{10}\cdot 5^4\cdot 17\cdot 19\cdot 23\cdot 29, \\
\lfloor \sqrt{n!} \rfloor & = &  16\,286\,585\,271\,694\,955, \\
\lfloor \sqrt{n!} \rfloor/d_1 & = & 1.000182299.
\end{eqnarray*}
\goodbreak 
\vskip 5pt

\noindent
When $n=40$,
\begin{eqnarray*}
n! & = & 815\,915\,283\,247\,897\,734\,345\,611\,269\,596\,115\,894\,272\,000\,000\,000\\
 & & = 2^{38} \cdot 3^{18} \cdot 5^9 \cdot 7^5 \cdot 11^3 \cdot 13^3 \cdot 17^2 \cdot 19^2 \cdot 23 \cdot 29 \cdot 31 \cdot 37,\\
d_1 & = & 903\,280\,055\,273\,332\,645\,708\,800 \\
& & = 2^{10}\cdot 3^{15}\cdot 5^2\cdot 7\cdot 11\cdot 13^3\cdot 19\cdot 23\cdot 29\cdot 31\cdot 37, \\
\lfloor \sqrt{n!} \rfloor & = &  903\,280\,290\,523\,322\,408\,635\,610, \\
\lfloor \sqrt{n!} \rfloor/d_1 & = & 1.000000260.
\end{eqnarray*}
Observe that $\rho_1(10!)=1\,890$, $\rho_1(20!)=1\,559\,376\,000$,\\
$\rho_1(30!)=16\,286\,248\,192\,500\,000$, and $\rho_1(40!)=
903\,280\,055\,273\,332\,645\,708\,800$.

\section{The case of additive functions} \label{sec:additive}
As a first example of additive functions, let us consider the function $f(n)=\log n$, for which we have $f(n!)=\sum_{k\leq n}\log k=S_{\log}(n)$.  As we will see, for large families of functions $f$, we do have that $f(n!)=S_f(n)$.

\subsection{Small additive functions}
We first consider the case of the ``small'' additive functions  $\omega(n)$ and $\Omega(n)$.

\begin{thm} \label{thm:add1} We have
$$\omega(n!)=\pi(n) \qquad \text{and}\qquad \Omega(n!)=S_{\Omega}(n).$$
Moreover,
\begin{eqnarray}\label{eq:th21a}
\omega(n!) & = & \frac{n}{\log n}\left(1+ O \left(\frac 1{\log n} \right) \right),\\ \label{eq:th21b}
\Omega(n!) & = & n\left(\log\log n +c_1+c_2+ O \left( \frac 1{\log n} \right) \right),
\end{eqnarray}
where
$c_1 = \gamma + \sum_p \big( \log(1- \frac 1p) + \frac 1p \big)$ and $c_2 = \sum_p \frac 1{p(p-1)}$ (here,  $\gamma$ stands for the Euler–Mascheroni constant).
\end{thm}

The proof of Theorem \ref{thm:add1} will be given in Section  \ref{sec:proofs}. Beforehand, we should mention that the average order of each of the functions $\omega(n)$ and $\Omega(n)$ has been studied with great accuracy. In particular, it is well known  (see for instance formulas (6.11) and (6.12) in
the book of De Koninck and Luca \cite{DL}) that
\begin{eqnarray}
\sum_{n\le N} \omega(n) & = & N\Big(\log \log N + c_1 + O \Big( \frac 1{\log N} \Big) \Big), \nonumber \\
\label{eq:Omega}
\sum_{n\le N} \Omega(n) & = & N\Big(\log \log N +  c_1 + c_2 + O \Big( \frac 1{\log N} \Big) \Big).
\end{eqnarray}
Similarly, it was proved by De Koninck \cite{JMDK-1972} that
\begin{eqnarray*}
\sum_{2\le n\le N} \frac 1{\omega(n)} & = & \frac N{\log \log N} \Big(1 + O \Big( \frac 1{\log \log N} \Big) \Big),\\
\sum_{2\le n\le N} \frac 1{\Omega(n)} & = & \frac N{\log \log N} \Big(1 + O \Big( \frac 1{\log \log N} \Big) \Big).
\end{eqnarray*}
In fact, more accurate formulas can be found in Chapter 5 of the book of De Koninck and Ivi\'c \cite{book-JM-AI}.
In addition, De Koninck \cite{JMDK-1974} proved that
$$\sum_{2\le n \le N} \frac{\Omega(n)}{\omega(n)} = N + c_2 \frac N{\log \log N} + O \Big( \frac N{(\log \log N)^2} \Big).$$

On the other hand, as we will see,  the behavior of the sum $\sum_{2\leq n\leq N}1/\pi(n)$ turns out to have a connection with the function $\omega(n!)$, and interestingly the  study of that particular sum has a long history, a survey of which can be found in \cite{JMDK-2021}. The best known estimate is due to Ivi\'c \cite{Ipi} and can be stated as follows:
\begin{equation} \label{eq:ivicprime}
\sum_{2\leq n\leq N}\frac{1}{\pi(n)} = \frac 12 \log^2N - \log N - \log \log N +c_3 + O \Big( \frac 1{\log N} \Big),
\end{equation}
where $c_3$ is an absolute constant that is known  to satisfy $6.6840< c_3 <6.7830$ (see Berkane and Dusart \cite{BD}).

Analogous results for  sums of the ``small'' additive functions $\omega(n)$ and $\Omega(n)$  at factorial arguments are fairly easy to obtain. In fact, as we will see in Section \ref{sec:proofs}, one can obtain most of the following results by combining the above estimates with Theorem \ref{thm:add1}.
\begin{thm} \label{thm:3}
 We have
\begin{eqnarray}
\sum_{n\leq N} \omega(n!) & = & \frac{N^2}{2\log N}\Big(1+O\Big(\frac{1}{\log N}\Big)\Big), \label{eq:as1} \\
\sum_{ n\le N} \Omega(n!) & = & \frac{N^2}{2} \Big(\log \log N + c_1+c_2 + O\Big(\frac{1}{\log N}\Big)\Big), \label{eq:as2}\\
\sum_{2\le n\le N} \frac 1{\omega(n!)} & = & \frac 12 \log^2N - \log N - \log \log N +c_3 + O \Big( \frac 1{\log N} \Big), \label{eq:as3}\\
\sum_{2\le n\le N} \frac 1{\Omega(n!)} & = & \frac{\log N}{\log \log N}\Big(1+O\Big(\frac{1}{\log\log N}\Big)\Big), \label{eq:as4}\\
\sum_{2\le n \le N} \frac{\Omega(n!)}{\omega(n!)} & = & N(\log N \log \log N + (c_1+c_2)\log N + O(\log\log N)). \label{eq:as5}
\end{eqnarray}
\end{thm}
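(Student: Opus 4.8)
The unifying idea is that \cref{thm:add1} converts every sum over factorial arguments into a sum involving $\pi(n)$ or $S_{\Omega}(n)$, after which each identity reduces to a known estimate combined with a partial summation. Indeed, \eqref{eq:as3} is immediate: since $\omega(n!)=\pi(n)$, its left-hand side is exactly $\sum_{2\le n\le N}1/\pi(n)$, which is Ivi\'c's formula \eqref{eq:ivicprime}. So the genuine work lies in the other four identities.

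For \eqref{eq:as1} and \eqref{eq:as2} the plan is to interchange the order of summation. Writing $\omega(n!)=\pi(n)=\sum_{p\le n}1$ and summing over $n\le N$ gives
$$\sum_{n\le N}\omega(n!)=\sum_{p\le N}(N-p+1)=(N+1)\pi(N)-\sum_{p\le N}p,$$
and similarly, using $\Omega(n!)=S_{\Omega}(n)=\sum_{k\le n}\Omega(k)$,
$$\sum_{n\le N}\Omega(n!)=\sum_{k\le N}(N-k+1)\,\Omega(k)=(N+1)S_{\Omega}(N)-\sum_{k\le N}k\,\Omega(k).$$
I would then feed in the prime number theorem (in the forms $\pi(N)\sim N/\log N$ and $\sum_{p\le N}p\sim N^2/(2\log N)$) to obtain \eqref{eq:as1}, and estimate $S_{\Omega}(N)$ and $\sum_{k\le N}k\,\Omega(k)$ from \eqref{eq:Omega} via Abel summation (the latter being $NS_{\Omega}(N)-\int_1^N S_{\Omega}(t)\,dt$) to obtain \eqref{eq:as2}. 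In both cases the positive term is asymptotically twice the claimed main term, while the subtracted sum is asymptotically the main term itself, so that the difference reproduces it, with the factor $1/2$ emerging from this cancellation.

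For \eqref{eq:as4} and \eqref{eq:as5} the plan is to insert the asymptotic $\Omega(n!)=S_{\Omega}(n)=n(\log\log n+c_1+c_2+O(1/\log n))$ coming from \eqref{eq:Omega}. For \eqref{eq:as4} this gives $1/\Omega(n!)=(1+O(1/\log\log n))/(n\log\log n)$, and a comparison of $\sum_{2\le n\le N}1/(n\log\log n)$ with $\int^{N}dt/(t\log\log t)$---equivalently $\int^{\log N}du/\log u$ after the substitution $u=\log t$---yields the main term $\log N/\log\log N$. For \eqref{eq:as5} I would combine the same estimate with the prime number theorem in the form $1/\pi(n)=(\log n/n)(1+O(1/\log n))$ to get
$$\frac{\Omega(n!)}{\omega(n!)}=\frac{S_{\Omega}(n)}{\pi(n)}=\log n\,\log\log n+(c_1+c_2)\log n+O(\log\log n),$$
and then sum termwise, the leading sum $\sum_{n\le N}\log n\,\log\log n$ being handled by integration by parts (with $\int\log t\,dt=t\log t-t$), which produces $N\log N\log\log N$ together with lower-order pieces absorbed into $O(N\log\log N)$.

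The routine part is the reduction via \cref{thm:add1} and the interchange of summation; the delicate part is the bookkeeping of error terms. In \eqref{eq:as1} and \eqref{eq:as2} one must retain the secondary terms in the prime number theorem and in \eqref{eq:Omega} in order to recover the stated relative error $O(1/\log N)$, precisely because the main terms arise as differences of two quantities of the same order of magnitude. For \eqref{eq:as5} the main care is in propagating the $O(1/\log n)$ error through the division by $\pi(n)$ and checking that, after summation, every secondary contribution is indeed $O(N\log\log N)$; this error-term accounting is where I expect the only real (though still routine) effort to lie.
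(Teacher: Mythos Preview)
Your proposal is correct and tracks the paper closely for \eqref{eq:as3}, \eqref{eq:as4}, and \eqref{eq:as5}: the paper, too, reduces via \cref{thm:add1}, substitutes $\Omega(n!)=n(\log\log n+c_1+c_2+O(1/\log n))$ and $1/\pi(n)=(\log n/n)(1+O(1/\log n))$, and then sums termwise with a partial-summation computation of $\sum n\log\log n$ or $\sum 1/(n\log\log n)$.

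For \eqref{eq:as1} and \eqref{eq:as2} your route differs slightly from the paper's. The paper substitutes the pointwise asymptotics $\omega(n!)=\pi(n)=\tfrac{n}{\log n}(1+O(1/\log n))$ and $\Omega(n!)=n(\log\log n+c_1+c_2+O(1/\log n))$ directly and sums, so no cancellation occurs: the main term $\tfrac{N^2}{2\log N}$ (resp.\ $\tfrac{N^2}{2}\log\log N$) appears immediately from $\sum_{n\le N} n/\log n$ (resp.\ $\sum_{n\le N} n\log\log n$). Your interchange-of-summation identity $\sum_{n\le N}\pi(n)=(N+1)\pi(N)-\sum_{p\le N}p$ (and its $\Omega$-analogue) is equally valid but, as you correctly anticipate, produces the main term as a difference of two quantities each of size $N^2/\log N$, so you must carry the secondary terms of the prime number theorem through the subtraction. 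Both approaches are routine; the paper's avoids the cancellation at the cost of a (harmless) pointwise use of the asymptotic for small $n$, while yours is arguably cleaner algebraically once the bookkeeping is done.
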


\subsection{Large additive functions}
Next, we consider the ``large'' additive functions
$\beta(n)$, $B(n)$ and $B_1(n)$ defined in Section 1.
For these, we have the following.

\begin{thm} \label{thm:add2} For each integer $n\ge 2$,
\begin{equation} \label{eq:th23}
\beta(n!)=\sum_{p\le n} p,\qquad B(n!)=S_B(n),\qquad \text{and}\qquad B_1(n!)=\sum_{p\le n} p^{\alpha_p(n)},
\end{equation}
where $\alpha_p(n)$ is defined below in \eqref{eq:polignac}.
Moreover, for all $n\ge 2$,
\begin{eqnarray} \label{eq:th23a}
\beta(n!) & = & \frac{n^2}{2\log n}\left(1 + O \left( \frac 1{\log n} \right) \right),\\ \label{eq:th23b}
B(n!) & = &  \frac{\pi^2}{12}\frac{n^2}{\log n}\left(1 + O \left( \frac 1{\log n} \right) \right),\\ \label{eq:th23c}
B_1(n!) & = &   2^{n-s_2(n)} +O\left((\sqrt 3)^n\right),
\end{eqnarray}
where $s_2(n)$ stands for the sum of the binary digits of $n$.
\end{thm}

Let us recall that
Alladi and Erd\H{o}s \cite{alladi-erdos} have proved that
\begin{equation} \label{eq:alladi-erdos}
\sum_{n\le N} \beta(n) \sim \frac{\pi^2}{12} \frac{N^2}{\log N}  \qquad (N\to \infty),
\end{equation}
and that De Koninck and Ivi\'c  \cite{jm-a} later showed that \eqref{eq:alladi-erdos} can be improved to
$$\sum_{n\le N} \beta(n)   = \sum_{i=1}^M  \frac{b_i N^2}{\log^i N} + O\left( \frac{N^2}{\log^{M+1} N} \right),$$
where $M$ is any preassigned positive integer and where each $b_i$ is a computable constant, with $b_1=\pi^2/12$.
Regarding the function $B$ and $B_1$,  Ivi\'c \cite{ivic-2003} proved that
$$\sum_{n\le N} B(n) = \sum_{n\le N} \beta(n) +O(N^{2/3})\quad
\mbox{ and } \quad \sum_{n\le N} B_1(n) = \sum_{n\le N} \beta(n) +O(N^{2/3}),$$
from which it follows in particular that
\begin{eqnarray} \label{eq:BB1}
\sum_{n\le N} B(n) & = & \frac{\pi^2}{12} \frac{N^2}{\log N}  \left( 1+ O \left( \frac 1{\log N} \right)\right) \\ \label{eq:BB2}
\sum_{n\le N} B_1(n) & = &\frac{\pi^2}{12} \frac{N^2}{\log N} \left( 1+ O \left( \frac 1{\log N} \right)\right).
\end{eqnarray}
Incidently, Ivi\'c \cite{ivic-1980} proved that
$$\sum_{2\le n \le N} \frac 1{\beta(n)} = \frac N{e^{\sqrt{2\log N \log \log N} +O(\sqrt{\log N \log \log \log N})}},$$
an estimate that was later slightly improved by Ivi\'c and Pomerance \cite{ivic-pomerance}.

The factorial counterparts of \eqref{eq:alladi-erdos}, \eqref{eq:BB1} and \eqref{eq:BB2} are as follows.
\begin{thm} \label{thm:2}
As $N$ becomes large,
\begin{eqnarray}
\sum_{2\le n \le N} \beta(n!) &=& \frac{N^3}{6\log N}\left( 1+ O \left( \frac 1{\log N} \right)\right), \label{eq:al1} \\
\sum_{2\le n \le N} B(n!) &=& \frac{\pi^2}{36}\frac{N^3}{\log N} \left( 1+ O \left( \frac 1{\log N} \right)\right), \label{eq:al2} \\
\sum_{2\le n \le N} B_1(n!) &=& 2^{N+O(\log N)} .\label{eq:al3}
\end{eqnarray}
Furthermore, letting $f$ be any one of the three functions $\beta$, $B$ and $B_1$, we have
$$\sum_{n=2}^\infty  \frac 1{f(n!)} <\infty.$$
\end{thm}

\section{The case of multiplicative functions} \label{sec:multiplicative}
\subsection{Classical sums}
Let us first consider the multiplicative functions $\gamma$ and $\phi$ already mentioned in Section 1.

\begin{thm} \label{thm:mult1}
We have
\begin{equation} \label{eq:multfuncest}
\gamma(n!)=\prod_{p\leq n}p\qquad \text{and}\qquad \phi(n!)=n!\prod_{p\leq n}(1-1/p).
\end{equation}
Moreover,
\begin{equation} \label{eq:multfuncas}
\gamma(n!)=e^{(1+o(1))n}\qquad  \text{and} \qquad \phi(n!)\sim n!\frac{e^{-\gamma}}{\log n} \qquad (n\to\infty).
\end{equation}
\end{thm}

Perhaps the most
challenging estimates of $f(n!)$ come from multiplicative functions $f$ that are neither completely nor strongly multiplicative,
for reasons that will be detailed in Section \ref{sec:proofs1}.
Of particular interest are the functions $d(n)$ and $\sigma(n)$, as well as their higher-order variants $d_k(n)$ and $\sigma_\kappa(n)$ for integers $k\geq 2$ and real numbers $\kappa$. Here $d_k(n):=\sum_{a_1\cdots a_k=n}1$ is the number of ways of representing $n$ as a product of $k$ positive integers and $\sigma_\kappa(n):=\sum_{d\mid n}d^\kappa$ is the sum of the $\kappa$-th powers of the positive divisors of $n$.

Ramanujan \cite{ram2} studied the behavior of $d(n!)$ by first conjecturing that for any given $\varepsilon>0$,
$$
C^{\frac{n}{\log n}(1-\varepsilon)}<d(n!)<C^{\frac{n}{\log n}(1+\varepsilon)},
$$
where $C=(1+1)\sqrt{1+\frac{1}{2}}\sqrt[3]{1+\frac{1}{3}}\sqrt[4]{1+\frac{1}{4}}\cdots \approx 3.51750$,
and by later proving \cite{ram1} the more explicit formula
\begin{equation} \label{eq:dn!4}
d(n!)=C^{\frac{n}{\log n}+o\left(\frac{n}{\log^2n}\right)} \qquad (n\to\infty).
\end{equation}
See Andrews and Berndt \cite{andrews} for more on these estimates.
Much later, Erd\H{o}s, Graham, Ivi\'{c}, and Pomerance \cite{egip} obtained an asymptotic series expansion for $\log d(n!)$ by showing that for any given integer $M\geq 0$,
\begin{equation*}
d(n!)=\exp\left\{\frac{n}{\log n}\sum_{\ell=0}^M\frac{c_\ell}{\log^\ell(n)} + O\Big(\frac{n}{\log^{M+2}n}\Big)\right\},
\end{equation*}
where $\displaystyle{c_\ell=\int_1^\infty\frac{\log(\left\lfloor t\right\rfloor+1)}{t^2}\log^\ell t\,dt}$. In particular,
\begin{equation} \label{eq:5}
\log d(n!)\sim c_0\frac{n}{\log n} \qquad (n\to\infty),
\end{equation}
where $c_0$ can be shown to be equal to $\log C$. This is because, on the one hand,
\begin{eqnarray*}
c_0 & = &\int_1^\infty \frac{\log(\left\lfloor t\right\rfloor+1)}{t^2}\,dt=\sum_{s=1}^\infty\int_{s}^{s+1}\frac{\log(s+1)}{t^2}\,dt \\
& = & \sum_{s=2}^\infty \log s \int_{s-1}^s\frac{dt}{t^2}=\sum_{s=2}^\infty \frac{\log s}{s(s-1)},
\end{eqnarray*}
while on the other hand,
\begin{eqnarray*}
\log C & = & \sum_{s=1}^\infty \frac{1}{s}\log\Big(1+\frac{1}{s}\Big)=\sum_{s=1}^\infty \frac{1}{s}(\log(s+1)-\log s) \\
& = & \sum_{s=2}^\infty \Big(\frac{1}{s-1}-\frac{1}{s}\Big)\log s = \sum_{s=2}^\infty \frac{\log s}{s(s-1)}.
\end{eqnarray*}
In fact, as we will see, each $c_\ell$ is given by a similar explicit formula, and furthermore
\begin{equation} \label{eq:factclapp}
c_\ell=(\ell+1)!+O\left((1-2^{-\ell-1})\ell!\right).
\end{equation}
We shall generalize these results to $d_k(n!)$.
\begin{thm} \label{thm:dk}
Given integers $k\geq 2$ and $M\geq 0$, we have
    $$
d_k(n!) = \exp\left\{\frac{n}{\log n}\sum_{\ell=0}^M\frac{c^{(k)}_\ell}{\log^\ell n} + O\Big(\frac{n}{\log^{M+2}n}\Big)\right\},
    $$
where
\begin{equation} \label{eq:clk}
c_{\ell}^{(k)} = \ell!\sum_{j=0}^\ell\frac{1}{j!}\sum_{s=1}^\infty \frac{\log^j s}{s}\log\Big(1+\frac{k-1}{s}\Big).
\end{equation}
Moreover,
$$
c_\ell^{(k)}=(k-1)(\ell+1)! + O\big((k-1)^2(1-2^{-\ell-1})\ell!\big).
$$
\end{thm}

In particular, Theorem \ref{thm:dk} implies that for each integer $k\ge 2$,
\begin{equation} \label{eq:dkas}
\log d_k(n!)\sim c_0^{(k)}\frac{n}{\log n} \qquad (n\to\infty).
\end{equation}
\begin{rem}
  Equation \eqref{eq:clk} allows one to estimate each of the constants $c_\ell^{(k)}$ for fixed $\ell$ and $k$. For instance, the first five values in the case $k=2$ are ${c_0^{(2)}\approx 1.2578}$, $c_1^{(2)}\approx 2.1139$, $c_2^{(2)}\approx 6.1145$, $c_3^{(2)}\approx 24.1764$, $c_4^{(2)}\approx 120.3601$. It is readily seen that approximation \eqref{eq:factclapp} is rapidly accurate.
  \end{rem}

In the same vein,
Fedorov \cite{FE} studied the number of divisors of the central binomial coefficient $ \binom{2n}{n}$ and obtained a similar asymptotic series. These results are easily extended to the function $d_k(n)$.

\begin{thm} \label{thm:central}
    Given integers $k\geq 2$ and $M\geq 0$, for each integer $n\ge 2$, we have
    \begin{equation} \label{eq:thmcen}
\frac{1}{\log k} \log d_k\Big(\binom{2n}{n}\Big) = \pi(2n)-\pi(n) + \frac{n}{\log n}\sum_{\ell=0}^M\frac{b_\ell}{\log^\ell n} + O\Big(\frac{n}{\log^{M+2}n}\Big),
    \end{equation}
    where
$\displaystyle{b_\ell = \sum_{n=1}^\infty \int_{n+1/2}^{n+1}\frac{\log^{\ell}t}{t^2}\,dt}$.
\end{thm}
\begin{rem}
Note that the right-hand side of \eqref{eq:thmcen} does not depend on $k$. Moreover, we can also deduce from this equality and the computation $b_0=\log 4-1$ that
$$
\log d_k\Big(\binom{2n}{n}\Big) \sim \log k \cdot \log 4\cdot \frac{n}{\log n}\qquad (n\to\infty).
$$
\end{rem}

It is clear that the same ideas can be used with the multiplicative function $\eta$ already defined in Section \ref{sec:intro} and for which we have  $\displaystyle{\log \eta(n!)\sim e_0\frac{n}{\log n}}$, where
$\displaystyle{e_0=\int_1^\infty \frac{\log \lfloor t\rfloor}{t^2}\,dt = \sum_{s=1}^\infty \frac{\log s}{s(s+1)}}$.
This asymptotic formula was first obtained by Jakimczuk \cite{jakim3}, but it was stated in a slightly different form, much weaker than the full statement we will now provide.
\begin{thm} \label{thm:rho}
Given any integer  $M\geq 0$, we have
    $$
\eta(n!) = \exp\left\{\frac{n}{\log n}\sum_{\ell=0}^M\frac{e_\ell}{\log^\ell n} + O\left(\frac{n}{\log^{M+2}n}\right)\right\},
    $$
where $\displaystyle{e_{\ell}= \ell!\sum_{j=0}^\ell\frac{1}{j!}\sum_{s=1}^\infty \frac{\log^j (s+1)}{s+1}\log\Big(1+\frac{1}{s}\Big)}$.
\end{thm}

As for the sum of the positive divisors of $n!$, it was studied by
Jakimczuk \cite{jakim, jakim2}, who obtained that
\begin{align} \label{eq:j}
\sigma(n!)\sim e^{\gamma} n!\log n \qquad (n\to\infty),
\end{align}
and, more explicitly, that
$$
\sigma(n!)= e^{\gamma}\sqrt{2\pi n}\Big(\frac{n}{e}\Big)^n\log n\Big(1+O\Big(\frac{1}{\log n}\Big)\Big).
$$
The proof of these results relies on convoluted arguments. Here, we simplify the proof and consider the more general function $\sigma_\kappa(n!)$ for any fixed $\kappa>1$, while at the same time improving the error term.

\begin{thm} \label{thm:sigmak}
    For all integers $n\ge 2$,
    $$
\sigma(n!)=n!\, e^\gamma\,\log n \Big( 1 + O\Big( \frac 1{\log^3 n} \Big)\Big),
    $$
and for any real $\kappa> 1$,
\begin{equation*} \label{eq:sigmak}
\sigma_\kappa(n!)=(n!)^\kappa\,\zeta(\kappa)\Big( 1 + O\Big( \frac {\log n}{n} \Big)\Big).
\end{equation*}
\end{thm}
Here $\zeta(\cdot)$ is the Riemann zeta function
\begin{equation} \label{eq:zeta}
\zeta(s) = \sum_{n\geq 1}\frac{1}{n^s} = \prod_p\left(1-\frac{1}{p^s}\right)^{-1}\qquad (\text{Re}(s)>1).
\end{equation}

\subsection{Sums of multiplicative functions running over factorials}
\label{subsec:mult-fac}
According to Wintner's theorem (see Theorem 6.13 in the book of De Koninck and Luca \cite{DL}), if two arithmetic functions $f(n)$ and $g(n)$ are connected through the relation
$$\sum_{n=1}^\infty \frac{f(n)}{n^s} = \zeta(s) \sum_{n=1}^\infty \frac{g(n)}{n^s} \quad \mbox{for all real $s>1$}$$
and if moreover $\sum_{n=1}^\infty g(n)/n$ converges absolutely, then $\sum_{n\le N} f(n) =(c+o(1))N$ as $N\to \infty$, where
$c:=\sum_{n=1}^\infty g(n)/n$. Applying this theorem to the multiplicative function $\gamma(n)/n$, one finds that
\begin{equation} \label{eq:gamma-10}
\sum_{n\le N} \frac{\gamma(n)}n = (c+o(1))N, \quad \mbox{where }  c=\prod_p\left(1-\frac{1}{p(p+1)}\right)
\end{equation}
(see \cite[Problem 6.5]{DL} for the details). Using (\ref{eq:gamma-10}) and partial summation, we obtain that
$$
\sum_{n\leq N} \gamma(n) = \left(\frac c2 +o(1)\right)N^2\qquad (N\to\infty).
$$
Also, it is known (see Theorems 320, 324, and 330 in
Hardy and Wright \cite{HW}) that
\begin{eqnarray*}
\sum_{n\le N} \phi(n) & =  & \frac 3{\pi^2} N^2 + O(N\log N),\\
\sum_{n\leq N} d(n) & =  & N\left(\log N + 2\gamma-1+O(N^{-1/2})\right),\\
 \sum_{n\le N} \sigma(n) & =  & \frac{\pi^2}{12} N^2 + O(N\log N).
 \end{eqnarray*}
Moreover, one can prove (see Problem 6.12 and Theorem 6.19 in \cite{DL}) that
$$
\sum_{n\leq N}\log d(n)=\log 2\cdot N\Big(\log\log N + c_1 + c_4 + O\Big(\frac{1}{\log N}\Big)\Big),
$$
where $c_1=\gamma + \sum_p \big( \log(1- \frac 1p) + \frac 1p \big)$ and $c_4=\sum_p \frac{\log(1+1/p)}{p(p-1)}$.
For the sums of $\sigma(n)/\phi(n)$ and $\phi(n)/\sigma(n)$, we have  the following.
\begin{prop} \label{prop:sigphi}
    Given any arbitrarily small number $\delta>0$,
\begin{equation}\label{eq:1}
\sum_{n\le N} \frac{\sigma(n)}{\phi(n)} = d_1 N + O(N^{\frac 12 + \delta}) \qquad \mbox{and} \qquad
\sum_{n\le N} \frac{\phi(n)}{\sigma(n)} = d_2 N + O(N^{\frac 12 + \delta}),
\end{equation}
where
\begin{eqnarray*}
d_1 & = & \prod_p \Big(1 + \frac{2p^2-1}{p(p+1)(p-1)^2} \Big) \approx 3.61744, \\
d_2 & = & \prod_p \Big( 1 - \frac 2{p(p+1)} - \frac{p-1}{p^2} \Big( \frac 1{(p+1)(p^2+p+1)} \\
& & \qquad \qquad + \frac 1{(p^2+p+1)(p^3+p^2+p+1)} + \cdots \Big) \Big)\\
& \approx & 0.45783.
\end{eqnarray*}
\end{prop}

Although we do not claim that the results in Proposition \ref{prop:sigphi} are new, we could not find them in the literature, so for the sake of completeness we shall prove them in Section \ref{sec:pfsum}.

For sums running over factorials, we have the following.

\begin{thm} \label{thm:6} We have
\begin{eqnarray}
\sum_{n\le N} \gamma(n!) & = & e^{N + O \big( \frac N{\log N} \big) }, \label{eq:m1} \\
\sum_{n\le N} \phi(n!) & = & e^{-\gamma}\frac{N!}{\log N}\Big(1+O\Big(\frac{1}{\log^3 N}\Big)\Big), \label{eq:m2}\\
\sum_{ n\le N} \log d(n!) & = & \frac{c_0}2\frac{N^2}{\log N}\Big(1+O\Big(\frac{1}{\log N}\Big)\Big), \label{eq:m3}\\
\sum_{n\le N} \sigma(n!) & = & e^{\gamma}N!\log N\Big(1+O\Big(\frac{1}{\log^3 N}\Big)\Big), \label{eq:m4}\\
\sum_{n\le N} \frac{\sigma(n!)}{\phi(n!)} & = & e^{2\gamma} \left( N\log^2N -2N \log N +2N \right) \Big(1 + O \Big( \frac 1{\log^3 N} \Big) \Big), \label{eq:m5}\\
\sum_{n\le N} \frac{\phi(n!)}{\sigma(n!)} & = & e^{-2\gamma} \frac N{\log^2 N} \Big(1 + O \Big( \frac 1{\log^3 N} \Big) \Big). \label{eq:m6}
\end{eqnarray}
\end{thm}

\section{Comparing $f(n!)$ with the extremal orders of $f(n)$} \label{sec:compare}
In Sections \ref{sec:additive} and \ref{sec:multiplicative}, we compared estimates of sums of additive and multiplicative functions with their factorial counterparts. It is also interesting to compare the behavior of $f(n!)$ with the maximum and minimum values  of $f(n)$.

In this section, we will use the estimate
\begin{equation} \label{eq:Stirling}
\log n!=n\log n - n +O(\log n),
\end{equation}
which follows from the fact that $\sum_{k\leq n}\log k=\int_1^n\log t\,dt+O(\log n)$, which itself represents a weak form of Stirling's formula
\begin{equation} \label{eq:stirling}
n!\sim \Big(\frac{n}{e}\Big)^n \sqrt{2\pi n} \qquad (n\to\infty).
\end{equation}
Estimate \eqref{eq:Stirling} thus implies that, as $n\to \infty$,
\begin{equation}\label{eq:stirling2}
\log n\sim \log\log n! \qquad  \text{and} \qquad  n\sim \frac{\log n!}{\log\log n!}.
\end{equation}

Sometimes $f(n!)$ is very far from the maximum of $f(n)$ over $n\leq N$. For instance, $\max_{n\leq N}\Omega(n)=\lfloor \log N/\log 2\rfloor,$ with equality if and only if $k$ is the largest positive integer such that $n=2^k\leq N$. However, by combining Theorem \ref{thm:add1} and \eqref{eq:stirling2}, we get that
$$
\Omega(n!)\sim \log n!\frac{\log\log\log n!}{\log\log n!} \qquad (n\to\infty),
$$
so that for large $n$, $\Omega(n!)<\log(n!)$, whereas for infinitely many integers $n$ we have $\Omega(n)>1.4 \log n$.

In other instances,  $f(n!)$ is almost as large as $f(n)$. Indeed,
we know that integers $n\leq N$ have at most $(1+o(1))\frac{\log N}{\log\log N}$ distinct prime factors, with equality if and only if $n$ is the product of the first $y$ primes, where $y$ is chosen maximally so that $n=\prod_{p\leq y}p\leq N$. We thus have $\omega(n)=\pi(y)$, and since $\omega(n!)=\pi(n)$, we see that it is almost of this form, and indeed we have
$$
\omega(n!)\sim \frac{\log n!}{(\log\log n!)^2}\qquad (n\to\infty).
$$
There is a similar pattern for $\gamma(n)$. Indeed, it is clear that $\max_{n\leq x}\gamma(n)=\prod_{p\leq y}p$ where $y$ is chosen as above. Thus we see that $\max_{n\leq x}\gamma(n)=\gamma(y!)=x^{1+o(1)}$ as $x\to\infty$, since $y\sim \log x$. On the other hand, $$
\gamma(n!)\sim(n!)^{\frac{1+o(1)}{\log\log n!}} \qquad (n\to\infty).
$$
Similarly, for $d(n)$, a classical result of Wigert \cite{wig} gives
$$
\log d(n)\leq \frac{\log 2\log n}{\log\log n} + O\Big(\frac{\log n}{(\log\log n)^2}\Big),
$$
with equality if and only if $n$ is the product of the first $r$ primes with $r$ large, while we see from \eqref{eq:5} and \eqref{eq:stirling2} that
$$
\log d(n!)\sim c_0\frac{\log n!}{(\log\log n!)^2} \qquad (n\to\infty),
$$
as already observed in \cite{egip}. More generally,
$$
\log d_k(n)\leq (\log k+o(1))\frac{\log n}{\log\log n} \qquad (n\to\infty),
$$
and we get from \eqref{eq:dkas} that
$$
\log d_k(n!)\sim c^{(k)}_0\frac{\log n!}{(\log\log n!)^2} \qquad (n\to\infty).
$$

On the other hand,  sometimes  $f(n!)$ attains the minimal or maximal value of $f(n)$. For example, we have
$$
\phi(n!)\sim e^{-\gamma}\frac{n!}{\log\log n!} \qquad \text{and} \qquad \sigma(n!)\sim e^\gamma n!\log\log n! \qquad (n\to\infty),
$$
while it is known  (see Theorems 323, 328 in \cite{HW}) that
$$
\liminf_{n\to\infty} \frac{\phi(n)\log\log n}{n}=e^{-\gamma} \qquad \text{and}\qquad \limsup_{n\to\infty}\frac{\sigma(n)}{n\log\log n}=e^{\gamma}.
$$
In other words, while the minimal order of $\displaystyle{\frac{\phi(n)\log\log n}{n}}$ exists and is equal to $e^{-\gamma}$, the subsequence
$\displaystyle{\left(\frac{\phi(n!)\log\log n!}{n!}\right)_{n\ge 1}}$ does converge to $e^{-\gamma}$ as $n\to \infty$. In the same manner, while the maximal order of
$\displaystyle{\frac{\sigma(n)}{n\log \log n}}$ exists and is equal to $e^{\gamma}$, the subsequence $\displaystyle{\left(\frac{\sigma(n!)}{n!\log \log n!}\right)_{n\ge 1}}$ converges to $e^{\gamma}$.

Regarding the generalized sum of divisors function at factorial arguments, we have that, given any real number $\kappa>1$,
$$
\limsup_{n\to\infty}\frac{\sigma_\kappa(n)}{n^\kappa}=\zeta(\kappa),
$$
while we know from Theorem \ref{thm:sigmak} that, for $\kappa>1$,
$$
\sigma_\kappa(n!)\sim (n!)^\kappa\,\zeta(\kappa) \qquad (n\to\infty).
$$

\section{Proofs of the main results} \label{sec:proofs}

\subsection{Useful tools}

One of our main tools will be de Polignac's formula (attributed by L.E. Dickson \cite{dickson} to Legendre)
\begin{equation} \label{eq:polignac}
n! = \prod_{p\le n} p^{\alpha_p(n)}, \mbox{ where } \alpha_p(n) =   \sum_{i=1}^\infty \left\lfloor \frac n{p^i} \right\rfloor =
 \sum_{i=1}^{\lfloor\log_p n\rfloor} \left\lfloor \frac n{p^i} \right\rfloor.
\end{equation}
This formula can be somewhat simplified. Indeed, it is easy to show that
\begin{equation} \label{eq:polignac-2}
\sum_{i=1}^\infty \left\lfloor \frac n{p^i} \right\rfloor = \frac{n-s_p(n)}{p-1},
\end{equation}
where $s_p(n)$ stands for the sum of the digits of $n$ in base $p$.
For a proof of (\ref{eq:polignac-2}), see Problem 438 in the book of De Koninck and Mercier \cite{1001}.

We will also rely on classical analytic number theory results. In particular, setting $\text{Li}(x):=\int_{2}^x\frac{dt}{\log t}$, we will be using the prime number theorem in the form
\begin{equation} \label{eq:pnt}
\pi(x)=\text{Li}(x)+O(x\exp(-\sqrt{\log x})),
\end{equation}
a consequence of Theorem 12.2 in the book of Ivi\'c \cite{ivic1}.
We shall also at times use the prime number theorem in the simpler form
\begin{equation} \label{eq:pntweak}
    \pi(N)=\frac{N}{\log N}\Big(1+O\Big(\frac{1}{\log N}\Big)\Big),
\end{equation}
which can be equivalently stated as
\begin{equation} \label{eq:thetapnt}
    \theta(N) = N + O \Big( \frac N{\log N} \Big),
\end{equation}
where $\theta$ is the Chebyshev function
\begin{equation} \label{eq:theta}
\theta(N):=\sum_{p\le N} \log p.
\end{equation}

From the prime number theorem, one easily deduces the following asymptotic formula for the sum of the primes up to $N$:
\begin{equation} \label{eq:sumprimes}
\sum_{p\leq N}p = \frac{N^2}{2\log N} \left(1+O\left(\frac 1{\log N}\right)\right).
\end{equation}

We will also need Mertens' theorem in its strong form obtained by Dusart \cite{pd}:
\begin{equation} \label{eq:mertens}
\prod_{p\le N} \Big(1 - \frac 1p \Big) = \frac{e^{-\gamma}}{\log N} \Big(1 + O \Big( \frac 1{\log^3 N} \Big) \Big).
\end{equation}

\subsection{Proofs of Theorems \ref{thm:add1}, \ref{thm:add2}, and \ref{thm:mult1}} \label{sec:proofs1}
Let $f$ be an additive function and $g$ a multiplicative function. Recall that we say that $f$ is \textit{completely} additive (resp. $g$ is \textit{completely} multiplicative)  if $f(nm)=f(n)+f(m)$ (resp., $g(nm)=g(n)g(m)$) for any positive integers $n$ and $m$. Also, $f$ is \textit{strongly} additive (resp. $g$ is \textit{strongly} multiplicative) if for any prime $p$ and positive integer $a$, $f(p^a)=f(p)$ (resp. $g(p^a)=g(p)$).

Note that $\omega$ and $\beta$ are strongly additive, whereas $\Omega$ and $B$ are completely additive.
\vskip 5pt

Let us first state the following easily established proposition.

\begin{prop} \label{prop:1}
Let $g$ a strongly additive (resp. strongly multiplicative) function. Then $g(n!)=\sum_{m\le n} g(m)$ (resp. $g(n!)=\prod_{p\leq n}g(p)$).
\end{prop}

The first identity in Theorem \ref{thm:add1} is a consequence of Proposition \ref{prop:1}. The proof of the second identity, namely $\Omega(n!)=S_\Omega(n)$, 
simply follows from the fact that $\Omega(n)$ is a completely additive function and therefore that 
$$\Omega(n!)= \sum_{m\le n} \Omega(m) = S_\Omega (n).$$

The last two estimates in Theorem \ref{thm:add1} follow respectively from estimates \eqref{eq:pntweak} and \eqref{eq:Omega}.

The first identity in
Theorem \ref{thm:add2}  follows from Proposition \ref{prop:1}.

The proof of the second one, namely of the identity $B(n!)=S_B(n)$, is immediate as it follows from the fact that $B(n)$ is completely additive.

Estimates \eqref{eq:th23a} and \eqref{eq:th23b} are immediate consequences of \eqref{eq:BB1} and \eqref{eq:sumprimes}.

The proof of estimate (\ref{eq:th23c}) goes as follows.
Using de Polignac's formula \eqref{eq:polignac} and the fact that $B_1$ is additive, we have
\begin{eqnarray} \label{eq:preuve-c} \nonumber 
B_1(n!) & = & B_1 \left( \prod_{p\le n} p^{\alpha_p(n)} \right) = \sum_{p\le n} p^{\alpha_p(n)} \\
& = & 2^{\alpha_2(n)} + \sum_{3\le p \le n} p^{\alpha_p(n)} = 2^{n-s_2(n)} + \sum_{3\le p \le n} p^{\frac{n-s_p(n)}{p-1}} ,
\end{eqnarray}
where we made use of formula \eqref{eq:polignac-2}.
First observe that 
\begin{equation} \label{eq:preuve-c2}
\sum_{3\le p \le n} p^{\frac{n-s_p(n)}{p-1}} < \sum_{3\le p \le n} p^{ n/(p-1)} = (\sqrt 3)^n \left( 1+ \sum_{5\le p\le n} \left( \frac{p^{1/(p-1)}}{\sqrt 3} \right)^n \right).
\end{equation}
Since $\displaystyle{\frac{p^{1/(p-1)}}{\sqrt 3}< \frac 9{10}}$ for each prime $p\ge 5$, it follows that
$$\sum_{5\le p\le n} \left( \frac{p^{1/(p-1)}}{\sqrt 3} \right)^n < \left(\frac 9{10} \right)^n \cdot \pi(n) < 2 \qquad (n\ge 5),$$
an inequality  which substituted in \eqref{eq:preuve-c2} gives
\begin{equation} \label{eq:vieux}
\sum_{3\le p \le n} p^{\frac{n-s_p(n)}{p-1}} < (\sqrt 3)^n (1+2) \ll (\sqrt 3)^n,
\end{equation}
which inserted in \eqref{eq:preuve-c} completes the proof of estimate (\ref{eq:th23c}).

Observing that both $\gamma(n)$ and $\phi(n)/n$ are strongly multiplicative functions, the identities in \eqref{eq:multfuncest} follow from  Proposition \ref{prop:1}. Moreover, using the identity
$\log\gamma(n!)=\sum_{p\leq n}\log p$, which by the prime number theorem in the form \eqref{eq:thetapnt} is asymptotic to $n$ as $n\to\infty$, proves the first identity in \eqref{eq:multfuncas}. The last estimate in \eqref{eq:multfuncas} follows from Mertens' theorem given through estimate \eqref{eq:mertens}. Gathering these observations, the proof of Theorem \ref{thm:mult1} is complete.

\subsection{Proofs of Theorems \ref{thm:3}, \ref{thm:2}, and \ref{thm:6}} \label{sec:pfsum}
Note that \eqref{eq:as3} is an immediate consequence of \eqref{eq:ivicprime}.
The other estimates in Theorem \ref{thm:3} follow at once from standard analytic number theory techniques, including partial summation which we will use repetitively without further mention.
For instance, it follows from Theorem \ref{thm:add1} and \eqref{eq:pntweak} that
$$
\sum_{2\leq n\leq N}\omega(n!)=\sum_{2\leq n\leq N}\frac{n}{\log n}\Big(1+O\Big(\frac{1}{\log n}\Big)\Big)=\frac{N^2}{2\log N}\Big(1+O\Big(\frac{1}{\log N}\Big)\Big),
$$
which proves \eqref{eq:as1}.

We skip the proof of estimate \eqref{eq:as2} since it can be obtained in a similar manner.

The proofs of  \eqref{eq:as3} and  \eqref{eq:as4} are similar. Hence, we only prove  \eqref{eq:as4}.  By Theorem \ref{thm:add1}, we have
\begin{eqnarray*} \label{eq:pf03} \nonumber
    \sum_{2\leq n\leq N} \frac{1}{\Omega(n!)}&=&\sum_{2\leq n\leq N}\frac{1}{n\big(\log \log n +  c_1 + c_2
      + O \big( \frac 1{\log n} \big) \big)}  \\ \nonumber
    &=& \sum_{2\leq n\leq N}\frac{1}{n\log\log n\big(1+ O \big( \frac 1{\log\log n} \big) \big)}  \\
    &=& \sum_{2\leq n\leq N}\frac{1}{n\log\log n}\Big(1+O\Big(\frac{1}{\log\log n}\Big)\Big).
\end{eqnarray*}
The main term is
\begin{equation*}
\sum_{2\leq n\leq N}\frac{1}{n\log\log n} = \frac{\log N+O(1)}{\log\log N} - \int_2^N\frac{\log t+O(1)}{t\log t(\log\log t)^2}\,dt.
\end{equation*}
Integrating by parts, this last integral is
\begin{align*}
\ll  \int_2^N \frac{dt}{t(\log\log t)^2} =\frac{\log N}{(\log\log N)^2} + O(1) -\int_2^N\frac{dt}{t(\log\log t)^3} = O\Big(\frac{\log N}{(\log\log N)^2}\Big).
\end{align*}
The same upper bound holds for the remaining sum in the error term since
\begin{align*}
\sum_{2\leq n\leq N}\frac{1}{n(\log\log n)^2} =
\int_2^N \frac{dt}{t(\log\log t)^2} + O(1).
\end{align*}
Combining these estimates yields \eqref{eq:as4}.

Finally, combining Theorem \ref{thm:add1} with \eqref{eq:Omega} and \eqref{eq:pntweak} gives
\begin{align*}
    \sum_{2\leq n\leq N}\frac{\Omega(n!)}{\omega(n!)} &= \sum_{2\leq n\leq N} \frac{\log n\big(\log \log n + c_1+c_2 + O \big( \frac 1{\log n} \big) \big)}{1+O\big(\frac{1}{\log n}\big)} \\
    &=\sum_{2\leq n\leq N} \log n\Big(\log \log n +  c_1+c_2 + O \Big( \frac 1{\log n} \Big) \Big)\Big(1+O\Big(\frac{1}{\log n}\Big)\Big) \\
    &= \sum_{2\leq n\leq N}\log n\log\log n + (c_1+c_2)\sum_{2\leq n\leq N}\log n + O\Big(\sum_{2\leq n\leq N}\log\log n\Big) \\
    &= S_1+S_2+S_3,
\end{align*}
say. Clearly, $
S_2=(c_1+c_2)\,N\log N + O(N)
$ by \eqref{eq:Stirling}, and $S_3=O(N\log\log N)$.
Also, from \eqref{eq:Stirling}, we obtain
\begin{align*}
S_1 &= (N\log N + O(N))\log\log N - \int_2^N\frac{(t\log t + O(t))}{t\log t}\,dt \\
&= (N\log N + O(N))\log\log N + O(N) =N\log N\log\log N + O(N\log\log N).
\end{align*}
Estimate \eqref{eq:as5} then follows.

We now prove Theorem \ref{thm:2}. From Theorem \ref{thm:add2}, we know that
$$
\sum_{2\leq n\leq N}\beta(n!) = \sum_{2\leq n\leq N}\frac{n^2}{2\log n} \left(1 + O \left( \frac 1{\log n} \right) \right).
$$
Approximating the sum by an integral and using integration by parts, we find that
$$
\sum_{2\leq n\leq N}\frac{n^2}{2\log n} = \int_2^N \frac{t^2}{2\log t}\,dt +O(1) = \frac{N^3}{6\log N}+O(1)  .
$$
Combining these last two asymptotic formulas proves \eqref{eq:al1}.
The proof of \eqref{eq:al2} is along the same lines.
Estimate  \eqref{eq:al3} is a consequence of the two inequalities
\begin{eqnarray*}
\sum_{n\le N} B_1(n!) & < &  \sum_{n\le N} 4\cdot 2^n = 2^{N+3}, \\
 \sum_{n\le N} B_1(n!) & > & \sum_{2\le n \le N} 2^{n- \frac{\log n}{\log 2} -1} \gg  2^{N+O(\log N)},
\end{eqnarray*}
where in the first set of inequalities we used the fact that $B_1(n!)< 2^n + 3(\sqrt 3)^n < 4 \cdot 2^n$, a consequence of \eqref{eq:preuve-c} and  \eqref{eq:vieux}, while in the second set of inequalities we used the fact that
$$s_2(n) = \left\lfloor \frac{\log n}{\log 2} \right\rfloor +1 \le  \frac{\log n}{\log 2} +1.$$  

Finally, using estimate \eqref{eq:th23a}, we have that
$$B_1(n!) \ge B(n!) \ge \beta(n!)  \gg \frac{n^2}{\log n},$$
implying that if $f$ stands for any of the functions $\beta$, $B$ and $B_1$, we obtain that
$$
\sum_{n=2}^\infty \frac{1}{f(n!)}\ll \sum_{n=2}^\infty \frac{\log n}{n^2} < \sum_{n=2}^\infty \frac 1{n^{3/2}}<\infty,
$$
which completes the proof of Theorem \ref{thm:2}.

Before proceeding with the proof of Theorem \ref{thm:6}, we give a proof of  Proposition\,\ref{prop:sigphi}.

\begin{proof}[Proof of Proposition \ref{prop:sigphi}]

First observe that for any real number $s>1$,
\begin{eqnarray*}
\sum_{n=1}^\infty \frac{\sigma(n)/\phi(n)}{n^s} & = &  \prod_p \Big( 1 + \frac{\sigma(p)/\phi(p)}{p^s}
+ \frac{\sigma(p^2)/\phi(p^2)}{p^{2s}} + \cdots \Big) \\
& = & \zeta(s) \prod_p \Big( 1- \frac 1{p^s} \Big) \\
& & \qquad \times \Big( 1 + \frac{(p+1)/(p-1)}{p^s}
+ \frac{(p^2+p+1)/(p(p-1))}{p^{2s}} + \cdots \Big) \\
& = & \zeta(s) F(s),
\end{eqnarray*}
say. It is clear that
\begin{eqnarray*}
F(s) & = & \prod_p \Big( 1 + \left( \frac{(p+1)/(p-1)}{p^s} - \frac 1{p^s} \right)\\
 & & \qquad + \left( \frac{(p^2+p+1)/(p(p-1))}{p^{2s}} - \frac{(p+1)/(p-1)}{p^{2s}} \right) + \cdots \Big).
\end{eqnarray*}
Since we intend to use Wintner's theorem (already mentioned in Subsection \ref{subsec:mult-fac}, as well as its refinement given in Problem 6.3 of \cite{DL}), we need to check that $F(s)$ converges absolutely at $s=\frac 12 + \delta$ and verify that indeed $F(1)=d_1$. One can easily check that the first of these two conditions is satisfied.
To verify the second condition, observe that
\begin{eqnarray*}
F(1) & = & \prod_p \Big(1 + \frac 2{p(p-1)} + \frac 1{p^3(p-1)} + \frac 1{p^5(p-1)} + \cdots  \Big)\\
& = & \prod_p \Big(1 + \frac 2{p(p-1)} + \frac 1{p(p+1)(p-1)^2} \Big) =  \prod_p \Big(1 + \frac{2p^2-1}{p(p+1)(p-1)^2} \Big).
\end{eqnarray*}
We can therefore apply Wintner's theorem, thereby establishing our first claim.

To establish the second claim, one can proceed as above and in the end obtain that
$\displaystyle{\sum_{n=1}^\infty \frac{\phi(n)/\sigma(n)}{n^s} =  \zeta(s) G(s)}$,
with
\begin{eqnarray*}
G(1) & = & \prod_p \Big( 1 - \frac 2{p(p+1)} - \frac{p-1}{p^2} \Big( \frac 1{(p+1)(p^2+p+1)} \\
& & \qquad \qquad + \frac 1{(p^2+p+1)(p^3+p^2+p+1)} + \cdots \Big) \Big),
\end{eqnarray*}
thus completing the proof of our second assertion.
\end{proof}

We now prove  Theorem \ref{thm:6}.
To prove \eqref{eq:m1}, we will show that
$$
\sum_{n\le N} \gamma(n!) = e^{\theta(N)+O(\log N)},
$$
where $\theta$ is the Chebyshev function defined in \eqref{eq:theta}, since then the result will follow from the prime number theorem in the form \eqref{eq:thetapnt}. From here on, we let $p_i$ stand for the $i$-th prime. Now, let $r$ be the unique integer satisfying $p_r \le N < p_{r+1}$. Observing that for all $n\in [p_i,p_{i+1})$, the term $\gamma(n!)$ remains unchanged and in fact is equal to $\prod_{p\le p_i}p$, one can easily see that
\begin{equation}\label{eq:21}
\sum_{n\le N} \gamma(n!) = \sum_{i=1}^{r-1} (p_{i+1}-p_i) \prod_{p\le p_i} p + (N-p_r) \prod_{p\le p_r} p.
\end{equation}
Using Bertrand's postulate, we have that $p_{i+1}-p_i<p_i$ and
therefore
\begin{equation}\label{eq:31}
\sum_{i=1}^{r-1} (p_{i+1}-p_i) \prod_{p\le p_i} p < \sum_{i=1}^{r-1} p_i \prod_{p\le p_i} p < \sum_{i=1}^{r-1} \prod_{p\le p_{i+1}}p
< r \prod_{p\le p_r} p.
\end{equation}
Combining \eqref{eq:21} and \eqref{eq:31}, we may write that
\begin{equation}\label{eq:41}
(N-p_r) \prod_{p\le p_r} p < \sum_{n\le N} \gamma(n!) < r \prod_{p\le p_r} p + (N-p_r) \prod_{p\le p_r} p= (N-p_r+r)\prod_{p\le p_r} p.
\end{equation}
From the definition of $\theta(N)$,  we have
\begin{equation}\label{eq:51}
\prod_{p\le p_r} p = \prod_{p\le N} p= e^{\theta(N)}.
\end{equation}
On the other hand, it is clear that
$N-p_r<N-p_r+r<N$, implying that  both $N-p_r$ and $N-p_r+r$ are $e^{O(\log N)}$. Combining this observation with \eqref{eq:41} and \eqref{eq:51}, it is immediate that
$$\sum_{n\le N} \gamma(n!) = e^{\theta(N)} e^{O(\log N)} = e^{\theta(N)+O(\log N)},$$
thus completing the proof of \eqref{eq:m1}.

To prove \eqref{eq:m2}, first observe that
\begin{align*}
\sum_{2\leq n\leq N}\frac{n!}{\log n}&= \frac{N!}{\log N}\Big(1+\frac{1}{N}\frac{\log N}{\log (N-1)}+\frac{1}{N(N-1)}\frac{\log(N-1)}{\log(N-2)}+\cdots \Big) \\
&= \frac{N!}{\log N}\Big(1+O\Big(\frac{1}{N}\Big)\Big),
\end{align*}
and, similarly,
$$
\sum_{2\leq n\leq N}\frac{n!}{\log^4 n} = \frac{N!}{\log^4 N} \Big(1+O\Big(\frac{1}{N}\Big)\Big).
$$
Combining these estimates with Theorem \ref{thm:mult1} and \eqref{eq:mertens}, we obtain
$$
\sum_{2\leq n\leq N}\phi(n!) = \sum_{2\leq n\le N}n!\frac{e^{-\gamma}}{\log n}\Big(1+O\Big(\frac{1}{\log^3n}\Big)\Big)
= e^{-\gamma}\frac{N!}{\log N}\Big(1+O\Big(\frac{1}{\log^3N}\Big)\Big),
$$
which proves \eqref{eq:m2}.

The proof of \eqref{eq:m4} is similar. On the other hand, \eqref{eq:m3} follows from partial summation after using the estimate
$$
\log d(n!) = c_0\frac{n}{\log n} + O\Big(\frac{n}{\log^2 n}\Big),
$$
a consequence of \eqref{eq:dn!4}.

Using the first estimate in the statement of Theorem \ref{thm:sigmak}, the second formula in the statement of Theorem \ref{thm:mult1} and estimate \eqref{eq:mertens} (Mertens' theorem), we may write that
\begin{eqnarray*}
\frac{\sigma(n!)}{\phi(n!)}
& = & e^{2\gamma} \log^2n \Big(1 + O \Big( \frac 1{\log^3 n} \Big) \Big)\Big(1 + O \Big( \frac 1{\log^3 n} \Big) \Big)\\
& = & e^{2\gamma} \log^2n \Big(1 + O \Big( \frac 1{\log^3 n} \Big) \Big).
\end{eqnarray*}
Summing this last expression over all integers $n\in [2,N]$, we obtain
\begin{eqnarray*}
\sum_{2\le n\le N} \frac{\sigma(n!)}{\phi(n!)} & = & e^{2\gamma} \sum_{2\le n\le N} \log^2 n  \Big(1 + O \Big( \frac 1{\log^3 n} \Big) \Big) \\
& = & e^{2\gamma} \Big(1 + O \Big( \frac 1{\log^3 N} \Big) \Big) \int_2^N \log^2 t\,dt \\
& = & e^{2\gamma} (N\log^2 N -2N\log N +2N )  \Big(1 + O \Big( \frac 1{\log^3 N} \Big) \Big),
\end{eqnarray*}
thus establishing \eqref{eq:m5}.

The proof of \eqref{eq:m6} follows along the same lines, thus completing the proof of Theorem \ref{thm:6}.

\subsection{Proofs of Theorems \ref{thm:dk}, \ref{thm:central}, and \ref{thm:rho}}
Our approach is at first similar to the one used in \cite{egip} for  $k=2$.
For a general $k\ge 2$, recall that $d_k(p^a)=\binom{k+a-1}{k-1}$, so that, in light of \eqref{eq:polignac}, we have
$$
d_k(n!)=\prod_{p\leq n}\binom{k+\alpha_p(n)-1}{k-1}.
$$
It is more convenient to work with $\log d_k(n!)$, splitting the resulting sum at $p=n^{3/4}$. Since $$\binom{k+\alpha_p(n)-1}{k-1}\ll_k \alpha_p^{k-1}(n)$$ and $\alpha_p(n)<\frac{n}{p-1}$ (this last inequality following from \eqref{eq:polignac-2}), we get
$$
\sum_{p\leq n^{3/4}}\log\binom{k+\alpha_p(n)-1}{k-1} \ll_k \sum_{p\leq n^{3/4}}\log\Big(\frac{n}{p-1}\Big) \ll_k \log n\sum_{p\leq n^{3/4}} 1 \ll_k n^{3/4},
$$
where in this last inequality we used \eqref{eq:pntweak}.

For $p\in (n^{3/4}, n]$, we have ${\alpha_p(n)=\lfloor n/p\rfloor}$. We therefore have
\begin{equation} \label{eq:pfdk1}
\sum_{n^{3/4}<p\leq n}\log\binom{k+\alpha_p(n)-1}{k-1} =\int_{n^{3/4}}^n\log\binom{k+\left\lfloor \frac{n}{x} \right\rfloor-1}{k-1} \,d\pi(x).
\end{equation}
Writing $\pi(x)=\text{Li}(x)+E(x)$ where $E(x)=O(x\exp(-\sqrt{\log x}))$ by \eqref{eq:pnt}, partial summation on the right-hand side of \eqref{eq:pfdk1} gives
\begin{equation*}
I:=\int_{n^{3/4}}^n\log\binom{k+\left\lfloor \frac{n}{x} \right\rfloor-1}{k-1}\,\frac{dx}{\log x}
\end{equation*}
as the main term, while the contribution of the error term is
$$
\int_{n^{3/4}}^n\log\binom{k+\left\lfloor \frac{n}{x} \right\rfloor-1}{k-1} \,dE(x)\ll_k  \log n\int_{n^{3/4}}^n dE(x) \ll_k n\log n\exp(-\sqrt{\log n}).
$$

To estimate the remaining integral $I$, we start with the substitution $t=n/x$ so that
$$
I = n\int_1^{n^{1/4}}\log\binom{k+\left\lfloor t \right\rfloor-1}{k-1} \frac{dt}{t^2\log(n/t)}.
$$
Since $1\leq t\leq n^{1/4}$, we have that for any fixed integer $M\geq 0$,
$$
\frac{1}{\log(n/t)}=\frac{1}{\log n\big(1-\frac{\log t}{\log n}\big)} = \frac{1}{\log n}\left\{ \sum_{\ell=0}^M\Big(\frac{\log t}{\log n}\Big)^\ell + O\Big(\frac{\log t}{\log n}\Big)^{M+1}\right\},
$$
from which we get
\begin{equation} \label{eq:pfdk2}
    I =\frac{n}{\log n}\int_{1}^{n^{1/4}}\log\binom{k+\left\lfloor t \right\rfloor-1}{k-1}\left\{ \sum_{\ell=0}^M\Big(\frac{\log t}{\log n}\Big)^\ell + O\Big(\frac{\log t}{\log n}\Big)^{M+1}\right\}\frac{dt}{t^2}.
\end{equation}
Since $\displaystyle{\int_{n^{1/4}}^\infty \log\binom{k+\left\lfloor t \right\rfloor-1}{k-1} (\log t)^{\ell}\frac{dt}{t^2}}$ converges and since
$$
\int_{n^{1/4}}^\infty \log\binom{k+\left\lfloor t \right\rfloor-1}{k-1} (\log t)^{\ell}\frac{dt}{t^2} \ll_k \int_{n^{1/4}}^{\infty} \frac{\log^{\ell+1}t}{t^2}\,dt \ll_k \frac{\log^{\ell+1}n}{n^{1/4}}
$$
for  $\ell = 0,1,\ldots,M$,
the error term in \eqref{eq:pfdk2} is
$$
O\Big(\frac{n}{\log^{M+2} n}\int_1^{n^{1/4}} \log\binom{k+\left\lfloor t \right\rfloor-1}{k-1} \log^{M+1} t\,\frac{dt}{t^2} \Big) = O\Big(\frac{n}{\log^{M+2} n}\Big),
$$
while the main term is
\begin{equation*}
\begin{split}
  \frac{n}{\log n} \sum_{\ell=0}^{M}&\frac{1}{(\log n)^\ell} \left\{\int_1^{\infty} \log\binom{k+\left\lfloor t \right\rfloor-1}{k-1} \log^\ell t\,\frac{dt}{t^2} + O\Big(\frac{\log^{\ell+1}n}{n^{1/4}} \Big)\right\} \\
&=  \frac{n}{\log n}\sum_{\ell=0}^{M}\frac{c_\ell^{(k)}}{(\log n)^\ell} + O(n^{3/4}),
\end{split}
\end{equation*}
where
\begin{equation} \label{eq:ckint}
    c_\ell^{(k)}= \int_1^{\infty} \log\binom{k+\left\lfloor t \right\rfloor-1}{k-1} \log^\ell t\,\frac{dt}{t^2}.
\end{equation}
This proves Theorem \ref{thm:dk} with the constants given in \eqref{eq:ckint}. We note that similar constants appeared in \cite{dkr,nas}, where the authors worked with integrals involving fractional parts. Here we use the same approach as the one used in these papers to show that the constants given in \eqref{eq:ckint} can be expressed as in \eqref{eq:clk}. Indeed, we have
$$c_{\ell}^{(k)}= \sum_{s=1}^\infty \int_s^{s+1}\log\binom{k+s-1}{k-1}\log^\ell t \,\frac{dt}{t^2}
   =\sum_{s=1}^\infty \log\binom{k+s-1}{k-1} I_\ell(s),$$
where
$\displaystyle{I_\ell(s):=\int_s^{s+1}\frac{\log^\ell t}{t^2}\,dt}$.
Integrating by parts, we get
$$
I_\ell(s)=\left.-\frac{\log^\ell t}{t}\right|_s^{s+1} + \ell\int_s^{s+1}\frac{\log^{\ell-1} t}{t^2}\,dt = \frac{\log^\ell s}{s}-\frac{\log^\ell (s+1)}{s+1} + \ell\cdot I_{\ell-1}(s).
$$
Repeating this $\ell-1$ times and observing that $I_0(s)=\int_s^{s+1}t^{-2}\,dt=\frac{1}{s(s+1)}$, we get
\begin{align*}
 I_\ell(s) &=\frac{\log^\ell s}{s}-\frac{\log^\ell (s+1)}{s+1} \\
 & \quad + \sum_{i=1}^{\ell-1} \ell(\ell-1)\cdots (\ell-i+1)\Big(\frac{\log^{\ell-i}s}{s}
    -  \frac{\log^{\ell-i}(s+1)}{s+1} \Big) + \frac{\ell!}{s(s+1)} \\
    &= \sum_{i=0}^{\ell}\frac{\ell!}{(\ell-i)!}\Big( \frac{\log^{\ell-i}s}{s} - \frac{\log^{\ell-i}(s+1)}{s+1} \Big),
\end{align*}
 from which we obtain that
\begin{align*}
 c_\ell^{(k)}=  &\sum_{s=1}^\infty \log\binom{k+s-1}{k-1} \sum_{i=0}^{\ell}\frac{\ell!}{(\ell-i)!}\Big( \frac{\log^{\ell-i}s}{s} - \frac{\log^{\ell-i}(s+1)}{s+1} \Big) \\
   &= \sum_{i=0}^{\ell}\frac{\ell!}{(\ell-i)!}\sum_{s=1}^\infty \log\binom{k+s-1}{k-1}\Big( \frac{\log^{\ell-i}s}{s} - \frac{\log^{\ell-i}(s+1)}{s+1} \Big) \\
   &= \sum_{i=0}^{\ell}\frac{\ell!}{(\ell-i)!}\sum_{s=1}^\infty\Big(\log\binom{k+s-1}{k-1}-\log\binom{k+s-2}{k-1}\Big)\frac{\log^{\ell-i}s}{s} \\
   &= \sum_{i=0}^{\ell}\frac{\ell!}{(\ell-i)!}\sum_{s=1}^\infty\log\Big(\frac{k+s-1}{s}\Big) \frac{\log^{\ell-i}s}{s}.
\end{align*}
Summing the terms backwards, we get the desired expression \eqref{eq:clk}.
Finally, observe that
$$
\sum_{s=1}^\infty \frac{\log^j s}{s}\log\Big(1+\frac{k-1}{s}\Big) = O(1)+ (k-1)\sum_{s=1}^\infty \frac{\log^j s}{s^2} + O\Big((k-1)^2\sum_{s=1}^\infty \frac{\log^j s}{s^3} \Big).
$$
Approximating the sum by an integral, we have
$$
\sum_{s=1}^\infty \frac{\log^j s}{s^2} = \int_1^\infty \frac{\log^j t}{t^2}\,dt + O(1) = \int_0^\infty t^je^{-t}\,dt + O(1) = \Gamma(j+1)+O(1),
$$
where $\Gamma(j+1)$ is
the Gamma function at $j+1$, which equals $j!$.
In the same way,
$$
\sum_{s=1}^\infty \frac{\log^j s}{s^3} = O\Big(\frac{j!}{2^{j+1}}\Big),
$$
so that
$$
c_\ell^{(k)} = \ell!\sum_{j=0}^\ell \Big(k-1+O\Big(\frac{(k-1)^2}{2^{j+1}}\Big)\Big)= (k-1)(\ell+1)! + O\big((k-1)^2(1-2^{-\ell-1})\ell!\big).
$$
This proves the last claim of Theorem \ref{thm:dk}.

Theorem \ref{thm:rho} is proved in the exact same way, but with the simpler constants
$$
e_\ell = \int_1^\infty \frac{\log\lfloor t\rfloor\log^\ell t}{t^2}\,dt \qquad (\ell\geq 0).
$$

To prove Theorem \ref{thm:central}, first observe that
$$
\binom{2n}{n} = \prod_{p\leq 2n}p^{\alpha_p\binom{2n}{n}},$$  where
$$\alpha_p\binom{2n}{n} = \sum_{j\geq 1}\Big(\left\lfloor \frac{2n}{p^j}\right\rfloor - 2\left\lfloor \frac{n}{p^j}\right\rfloor\Big).
$$
It follows that
$$
\log d_k\Big(\binom{2n}{n}\Big) = \sum_{p\leq 2n}\log\binom{k+\alpha_p\binom{2n}{n}-1}{k-1}.
$$
If $n<p\leq 2n$, then $\log\binom{k+\alpha_p\binom{2n}{n}-1}{k-1}$ reduces to $\log k$, so that
$$
\sum_{n<p\leq 2n}\log\binom{k+\alpha_p\binom{2n}{n}-1}{k-1} = (\pi(2n)-\pi(n))\log k.
$$
To deal with the remaining part, we use the same method as in the proof of Theorem \ref{thm:dk} to obtain that for any integer $M\geq 0$,
$$
\sum_{p\leq n}\log\binom{k+\alpha_p\binom{2n}{n}-1}{k-1}
=
\frac{n}{\log n}\sum_{\ell=0}^M\frac{b_\ell^{(k)}}{\log^\ell n} + O\Big(\frac{n}{\log^{M+2}n}\Big),
$$
where this time
$$
b_\ell^{(k)} = \int_1^\infty \log\binom{k+[2t]-2[t]-1}{k-1}\log^\ell t\,\frac{dt}{t^2}.
$$
Now observe that the integrand vanishes unless $n+\frac{1}{2}\leq t<n+1$ for some integer $n\geq 1$, in which case it becomes $t^{-2}\log^\ell t\log k.$
The result then follows.

\subsection{Proof of Theorem \ref{thm:sigmak}}
According to  Mertens' theorem already stated in \eqref{eq:mertens}, ${\prod_{p\leq x}(1-1/p)\sim e^{-\gamma}/\log x}$ as $x\to\infty$.
Perhaps surprisingly, the behavior of the somewhat similar product $\prod_{p\leq n}(1-p^{-\alpha_p(n)-1})$ is very different, since as we will now see it is asymptotic to $1$ as $n\to\infty$.

\begin{lem} \label{lem:An}
    For integers $n\geq 2$,
    $$
\prod_{p\le n} \Big(1 - \frac 1{p^{\alpha_p(n)+1}} \Big) = 1+O\Big(\frac{\log n}{n}\Big).
    $$
\end{lem}

\begin{proof}
First set $\displaystyle{A(n) := \prod_{p\le n} \Big(1 - \frac 1{p^{\alpha_p(n)+1}} \Big)}$.
Clearly, $A(n)<1$ for all $n\ge 2$. We will therefore focus our attention on finding a lower bound for $A(n)$ which is ``very close'' to 1.

Let $r\ge 3$ be a fixed integer (which will eventually be chosen to be large). Then,
$$A(n) = \prod_{p\le n/r} \Big(1 - \frac 1{p^{\alpha_p(n)+1}} \Big) \cdot
\prod_{n/r< p \le n} \Big(1 - \frac 1{p^{\alpha_p(n)+1}} \Big)={A'}_{r}(n) \cdot {A''}_{r}(n),$$
say.
Since, for each prime $p\le n$, we have $\alpha_p(n)\ge 1$, it follows that $\displaystyle{\frac 1{p^{\alpha_p(n)+1}}\le \frac 1{p^2}}$ and therefore that
$\displaystyle{1- \frac 1{p^{\alpha_p(n)+1}} \ge 1 - \frac 1{p^2}}$,
which means that
 \begin{equation*}
A^{''}_r(n)=\prod_{n/r < p\le n} \Big(1 - \frac 1{p^{\alpha_p(n)+1}} \Big) \ge \prod_{n/r <p\le n} \Big( 1 - \frac 1{p^2} \Big).
\end{equation*}
Since for all $y\in (0,1)$, we have $\log(1-y)>-2y$, we may write that
\begin{equation} \label{eq:6}
A^{''}_r(n) \ge   \exp\Big\{ \sum_{n/r<p\le n} \log(1-1/p^2) \Big\}> \exp\Big\{ -2 \sum_{n/r<p\le n} \frac 1{p^2}  \Big\}.
\end{equation}
Now observe that
\begin{equation} \label{eq:7}
\sum_{n/r<p\le n} \frac 1{p^2} < \sum_{p>n/r} \frac 1{p^2}< \int_{n/r}^\infty \frac{dt}{t^2} = \frac rn.
\end{equation}
Using (\ref{eq:7}) in (\ref{eq:6}), we obtain
\begin{equation} \label{eq:8}
A^{''}_r(n) > \exp \Big\{ -2 \frac rn  \Big\} = 1 + O \left( \frac rn \right).
\end{equation}

We now move to find a lower bound for $A^{'}_r(n)$. First observe that for $p\le n/r$, we have
$$\alpha_p(n) \ge \left\lfloor \frac np \right\rfloor \ge \left\lfloor r \right\rfloor = r,$$
which implies that
$\alpha_p(n)+1 \ge r+1$ and therefore that
\begin{equation} \label{eq:9}
A^{'}_r(n) \ge \prod_{p\le n/r} \Big(1 - \frac 1{p^{r+1}} \Big) > \prod_p  \Big(1 - \frac 1{p^{r+1}} \Big) = \frac 1{\zeta(r+1)}.
\end{equation}
Approximating \eqref{eq:zeta} by an integral, we have for any real $s\geq 3$,
$$\zeta(s) \le 1 + \frac 1{2^s} + \int_2^\infty t^{-s}\,dt = 1+ \frac 1{2^s} + \frac 1{2^{s-1}(s-1)}\le   1+ \frac 1{2^s} +  \frac 1{2^s} = 1+ \frac 2{2^s}. $$
Using this inequality with $s=r+1$ in \eqref{eq:9}, we find that
\begin{equation} \label{eq:11}
A^{'}_r(n)  > \frac 1{\zeta(r+1)} \ge  \frac 1{1+ 1/2^r} > 1 - \frac 1{2^r} .
\end{equation}

Combining \eqref{eq:8} with (\ref{eq:11}), while choosing $\displaystyle{r=\left\lfloor \frac{\log n}{\log 2} \right\rfloor}$, we obtain that
\begin{equation} \label{eq:12}
A(n)= A^{'}_r(n)\cdot A^{''}_r(n)  >  \Big(1 + O \Big( \frac{\log n}n \Big)\Big) \cdot \Big( 1+O\Big(\frac 1n \Big) \Big)= 1 + O \Big( \frac{\log n}n \Big).
\end{equation}
Combining (\ref{eq:12}) with our first observation to the effect that $A(n)<1$ establishes that
$\displaystyle{A(n) = 1+ O \Big( \frac{\log n}n \Big)}$, thus completing the proof of Lemma \ref{lem:An}.
\end{proof}

We can now prove Theorem \ref{thm:sigmak}.
Let $\kappa\geq 1$ and observe that for any prime power $p^a$,
\begin{equation} \label{eq:sigpower}
\sigma_\kappa(p^{a})=1+p^\kappa+\cdots + p^{a\kappa}=\frac{p^{(a+1)\kappa}-1}{p^\kappa-1} =
p^{a\kappa}\frac{1-1/p^{(a+1)\kappa}}{1-1/p^\kappa}.
\end{equation}
It follows from equations \eqref{eq:polignac} and \eqref{eq:sigpower} that
$$
\sigma_\kappa(n!)
=\prod_{p\leq n}p^{\alpha_p(n)\kappa}\frac{1-1/p^{(\alpha_p(n)+1)\kappa}}{1-1/p^\kappa}
=\frac {(n!)^\kappa}{\prod_{p\le n}(1 - \frac {1}{p^\kappa})} \cdot \prod_{p\leq n}\Big(1-\frac{1}{p^{(\alpha_p(n)+1)\kappa}}\Big).
$$
Letting $A_\kappa(n)=\prod_{p\leq n}(1-1/p^{(\alpha_p(n)+1)\kappa})$,
we see from \eqref{eq:mertens} that
$$
\sigma(n!)=n!e^\gamma\log n\Big(1+O\Big(\frac{1}{\log^3 n}\Big)\Big)\cdot A_1(n),
$$
and from \eqref{eq:zeta} that $\sigma_\kappa(n!) = (n!)^\kappa\zeta(\kappa)\cdot A_\kappa(n)$ for $\kappa>1$.
Observing that $1>A_\kappa(n)>A_1(n)=A(n)$, we  conclude that the estimate in Lemma \ref{lem:An} holds for any fixed $\kappa\geq 1$, and the result follows.

\subsection{Proofs of Theorems \ref{thm:middle-1}, \ref{thm:middle-2}, and \ref{thm:middle-3}} \label{sec:mid1}
Studying sums of the middle divisors is much easier at factorial arguments because,  according to Proposition \ref{prop:d1d2}, the values of $\rho_1(n!)$ and $\rho_2(n!)$ are close to $\sqrt{n!}$.

\begin{proof}[Proof of Proposition \ref{prop:d1d2}]
First of all, it is clear that (\ref{eq:close-2}) is an immediate consequence of (\ref{eq:close-1}). We will therefore focus on the proof of (\ref{eq:close-1}).
We start with the following construction. First recall that
\begin{equation} \label{eq:recall-n!}
n! = \prod_{p\le n} p^{\alpha_p(n)}, \quad \mbox{where } \alpha_p(n) = \frac{n-s_p(n)}{p-1},
\end{equation}
with $s_p(n)$ standing for the sum of the digits of $n$ in base $p$. Observing that $\alpha_p(n)=1$ for all primes $p\in (n/2,n]$, we may write the factorisation of $n!$ as
\begin{equation} \label{eq:facto-n!}
n! = \underbrace{2\cdots 2}_{\alpha_2(n)\,\mbox{\tiny times}} \cdot  \underbrace{3\cdots 3}_{\alpha_3(n)\,\mbox{\tiny times}} \cdot  \underbrace{5\cdots 5}_{\alpha_5(n)\,\mbox{\tiny times}}
\cdots p_{\pi(n/2)+1} \cdots p_{\pi(n)},
\end{equation}
where $p_i$ stands for the $i$-th prime, and therefore in the above $p_{\pi(n)} =P(n!)=P(n)$, the largest prime factor of $n$.

Set $w_1=w_1(n):=p_{\pi(n/2)+1} \cdots p_{\pi(n)}$ and $w_2=w_2(n)=n!/w_1$, so that
$$n! = \underbrace{2^{\alpha_2(n)} \cdot 3^{\alpha_3(n)} \cdots p_k^{\alpha_{p_k}(n)}}_{w_2} \cdot \underbrace{p_{k+1}\cdots p_{\pi(n)}}_{w_1},$$
where $k=\pi(n/2)$.

Recall the following result of R. Tijdeman \cite{tij}.
\begin{quote}
{\sl Given an infinite sequence $n_1<n_2<\cdots$ of $B$-friable numbers (or $B$-smooth numbers), there exists a positive constant $c(B)$
such that
\begin{equation} \label{eq:tij}
n_{i+1}-n_i \ll \frac{n_i}{\log^{c(B)} n_i}.
\end{equation}}
\end{quote}
It is easily seen that if $B_1<B_2$, then $c(B_2)\ge c(B_1)$. Therefore, one can assume the existence of a positive number $c$ satisfying $c\le c(B)$ for all integers $B\ge 2$. In fact, Langevin \cite{langevin} later obtained a lower bound for this constant $c$, a very small one indeed (actually somewhat smaller than $1/10^{36}$) but nevertheless effective. In the following, we will therefore assume that $c\in(0,1)$.

For convenience, let us assume that $n\ge 6$ and consider the set $S$ of all the divisors of $w_2$ which are no larger than $\sqrt{n!}/w_1$ and let $r:=\max S$. Finally, let $d_1=r \cdot w_1$. It is clear that $d_1\mid n!$ and that $d_1\le \sqrt{n!}$.

Since $w_1=w_1(n)>1$, it follows that $\sqrt{n!}$ is never an integer. Furthermore, observe that if
$$n_1<n_2<\cdots< n_k=: d_1 < \sqrt{n!} < n_{k+1}=:d_1^{+}$$
 is the list of all the positive divisors of $n!$ smaller than $\sqrt{n!}$ plus the one located immediately after $\sqrt{n!}$, then it follows from (\ref{eq:tij}) that for some positive constant\,$A$,
\begin{equation}\label{eq:d1-a}
n_{k+1}-n_k =d_1^{+} -d_1 < A \frac{d_1}{(\log d_1)^{c}}.
\end{equation}
Since the function $t/(\log t)^{c}$ is increasing for all $t\ge e$, it follows from (\ref{eq:d1-a}) that
\begin{equation}\label{eq:d1-b}
d_1^{+} -d_1 < A \frac{\sqrt{n!}}{(\log \sqrt{n!})^{c}},
\end{equation}
and therefore that
$$
d_1 > d_1^{+} - A \frac{\sqrt{n!}}{(\log \sqrt{n!})^{c}} > \sqrt{n!}  - A \frac{\sqrt{n!}}{(\log \sqrt{n!})^{c}}.
$$
Using a weak form of Stirling's formula, we have $\displaystyle{\sqrt{n!} > \left( \frac ne \right)^{n/2}}$, thereby implying that
\begin{equation}\label{eq:d1-d}
(\log \sqrt{n!})^{c} > \Big( \frac n2(\log n-1) \Big)^{c} > n^c,
\end{equation}
provided $n\geq 21$.

Using (\ref{eq:d1-d}) in (\ref{eq:d1-b}), we obtain
\begin{equation}\label{eq:d1-e}
d_1 > \sqrt{n!} - \frac{\sqrt{n!}}{n^{c}} = \sqrt{n!} \Big( 1 - \frac 1{n^c} \Big),
\end{equation}
thus completing the proof of the inequalities on the left-hand side of (\ref{eq:close-1}). Those on the right-hand side will easily follow by simply setting $d_2:=n!/d_1$; indeed, we then have
$$d_2= \frac{n!}{d_1} < \frac{n!}{(1-1/n^c)\sqrt{n!}} < \sqrt{n!} \Big(1 + \frac 2{n^c} \Big),$$
provided $n$ is sufficiently large.
\end{proof}

We will also be using the following  lemma, whose proof is immediate.
\begin{lem} \label{lem:1}
For any given integer $N\ge 1$, we have
$$
\sum_{n=1}^N n! =  N! \Big( 1 + O \Big( \frac 1N \Big) \Big)\quad \mbox{and} \quad
\sum_{n=1}^N \sqrt{n!}  =  \sqrt{N!} \Big( 1 + O \Big( \frac 1{\sqrt N} \Big) \Big).
$$
\end{lem}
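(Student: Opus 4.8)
The plan is to isolate the largest summand in each sum---namely $N!$ in the first and $\sqrt{N!}$ in the second---and to show that every other term is negligible by comparison, since the factorial grows fast enough that the summands decay geometrically as one moves away from the top index. Concretely, for the first sum I would reindex by $j=N-n$ and factor out $N!$ to write
$$\sum_{n=1}^N n! = N!\sum_{j=0}^{N-1} \frac{(N-j)!}{N!} = N!\sum_{j=0}^{N-1} \frac{1}{N(N-1)\cdots(N-j+1)},$$
where the $j=0$ term is the empty product, equal to $1$, and supplies the leading $N!$.

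For the tail, the key observation is that the ratio of consecutive terms is $1/(N-j+1)\le 1/2$ for all $1\le j\le N-1$ (when $N\ge 2$), so that the $j$-th term is at most $2^{-(j-1)}/N$. Summing the resulting geometric series yields
$$\sum_{j=1}^{N-1}\frac{1}{N(N-1)\cdots(N-j+1)} \le \frac 1N\sum_{j\ge 1} 2^{-(j-1)} = \frac 2N = O\Big(\frac 1N\Big),$$
which establishes the first estimate.

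The second estimate follows in exactly the same way after taking square roots: factoring out $\sqrt{N!}$ produces a sum whose $j$-th term is the square root of the corresponding term above, hence at most $2^{-(j-1)/2}/\sqrt N$, and the same geometric summation bounds the tail by $O(1/\sqrt N)$. There is no genuine obstacle here---this is the ``immediate'' lemma flagged in the text---and the only point worth verifying is that the geometric decay sets in uniformly across the range, which is guaranteed by the bound $N-j+1\ge 2$ valid for all $1\le j\le N-1$; the remaining case $N=1$ is trivial.
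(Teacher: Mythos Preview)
Your proof is correct; the paper itself omits the argument entirely, declaring the lemma's proof ``immediate,'' so there is nothing to compare against. Your geometric-decay bound via the factor $N-j+1\ge 2$ is exactly the kind of one-line justification the authors had in mind.
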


We now prove Theorem \ref{thm:middle-1}.
It follows from Proposition \ref{prop:d1d2} that
\begin{equation} \label{eq:th1-a}
\sum_{n\le N} \rho_1(n!)  =  \sum_{n\le N} \sqrt{n!} \Big( 1 + O \Big( \frac 1{n^c} \Big) \Big)
   =  \sum_{n\le N} \sqrt{n!} + O \Big( \sum_{n\le N} \frac{\sqrt{n!}}{n^c} \Big).
\end{equation}
Observe that
\begin{eqnarray} \label{eq:th1-b} \nonumber
\sum_{n\le N} \frac{\sqrt{n!}}{n^c} & = & \frac{\sqrt{N!}}{N^c} \Big( 1 + \Big(\frac N{N-1}\Big)^c \frac 1{\sqrt N} +
\Big(\frac N{N-2}\Big)^c \frac 1{\sqrt{N(N-1)}} + \cdots \Big) \\
& = &     \frac{\sqrt{N!}}{N^c} \Big( 1 + O \Big( \frac 1{\sqrt N} \Big) \Big) = O \Big(  \frac{\sqrt{N!}}{N^c}  \Big).
\end{eqnarray}
Substituting (\ref{eq:th1-b}) in (\ref{eq:th1-a}), and using Lemma \ref{lem:1}, we get that
\begin{eqnarray*}
\sum_{n\le N} \rho_1(n!) & = & \sum_{n\le N} \sqrt{n!}  + O \Big(  \frac{\sqrt{N!}}{N^c}  \Big)
 =  \sqrt{N!} \Big( 1 + O \Big( \frac 1{\sqrt N} \Big) \Big) + O \Big(  \frac{\sqrt{N!}}{N^c}  \Big) \\
& = & \sqrt{N!} \Big( 1 + O \Big( \frac 1{N^c} \Big) \Big),
\end{eqnarray*}
thus proving the first estimate of Theorem \ref{thm:middle-1}. The second estimate is obtained through a similar reasoning.

We move on to Theorem \ref{thm:middle-2}.
Using Proposition \ref{prop:d1d2}, we obtain
\begin{equation} \label{eq:th2-a}
\sum_{n\le N} \frac{\rho_1(n!)}{\rho_2(n!)}  =  \sum_{n\le N} \frac{\sqrt{n!}(1+O(1/n^c))}{\sqrt{n!}(1+O(1/n^c))}
= \sum_{n\le N}\Big( 1 + O \Big( \frac 1{n^c} \Big) \Big)
 =  N + O \Big( \sum_{n\le N} \frac 1{n^c}  \Big).
\end{equation}
Approximating this last sum by an integral, we get
$$  \sum_{n\le N} \frac 1{n^c} = \int_1^N \frac 1{t^c}dt +O(1) = \frac{N^{1-c}}{1-c} +O(1) =O(N^{1-c}).$$
Using this last estimate in (\ref{eq:th2-a}), we obtain that
$$\sum_{n\le N} \frac{\rho_1(n!)}{\rho_2(n!)} = N + O(N^{1-c}),$$
thus establishing the first estimate of Theorem \ref{thm:middle-2}. The second estimate can be proved in a similar manner.

To prove Theorem \ref{thm:middle-3},
we use Proposition \ref{prop:d1d2} and Stirling's formula \eqref{eq:stirling} to obtain
\begin{eqnarray*}
\sum_{2\le n\le N} \frac{\log \rho_1(n!)}{\log \rho_2(n!)} & = &
\sum_{2\le n\le N} \frac{ \log( \sqrt{n!} (1+O(1/n^c))) }{ \log( \sqrt{n!} (1+O(1/n^c))) } \\
& = & \sum_{2\le n\le N} \frac{ \log \sqrt{n!} + \log(1+O(1/n^c))}{ \log \sqrt{n!} + \log(1+O(1/n^c))} \\
& = & \sum_{2\le n\le N} \frac{\log \sqrt{n!} + O(1/n^c)}{\log \sqrt{n!} + O(1/n^c)} =
\sum_{2\le n\le N} \frac{1 + O \Big( \frac 1{n^c \log\sqrt{n!}} \Big)}{1 + O \Big( \frac 1{n^c \log\sqrt{n!}} \Big)}\\
& = & \sum_{2\le n\le N} \Big( 1 + O \Big( \frac 1{n^{c+1}\log n} \Big) \Big) =  N + O \Big( \sum_{2\le n\le N} \frac 1{n^{c+1}\log n}   \Big)\\
& = & N + O(1),
\end{eqnarray*}
since the series $\displaystyle{\sum_{n=2}^\infty \frac 1{n^{c+1}\log n}}$ converges. This establishes the first estimate in Theorem \ref{thm:middle-3}. The second estimate is proved similarly.

\section{An analogue of Chowla's conjecture for factorial arguments} \label{sec:chowla}

Also of interest is the behavior of the quotient $\displaystyle{ \frac{f(n!)}{f(n-1)!}}$ for various arithmetic functions $f$.
For instance, in \cite{egip}, the authors showed that
$$
\frac{d(n!)}{d((n-1)!)}=1+\frac{P(n)}{n}+O\Big(\frac{1}{n^{1/2}}\Big),
$$
where $P(n)$ is the largest prime factor of $n$. Such ratios of arithmetic functions are easier to manage for some large classes of arithmetic functions. For instance, if $f$ is completely additive, then
    $$
\frac{f(n!)}{f((n-1)!)} = 1+\frac{f(n)}{S_f(n-1)},
    $$
    while if $f$ is strongly additive,
    $$
\frac{f(n!)}{f((n-1)!)} = \begin{cases}
1+\frac{f(n)}{\sum_{p\leq n-1}f(p)} &\text{if } n \text{ is prime}, \\
1 &\text{otherwise}.
    \end{cases}
    $$
In particular, the additive functions $\omega$, $\Omega$, and $\beta$ are such that the ratio of their consecutive values at factorial arguments is $\sim 1$ as $n\to\infty$.

On the other hand, for the multiplicative functions $\gamma$ and $\phi$, the results are more interesting.  This is because, for every completely multiplicative function $f$, we have
$$\frac{f(n!)}{f((n-1)!)}=f(n),$$ and
for every strongly multiplicative function $f$, we have
 $$\frac{f(n!)}{f((n-1)!)}=\begin{cases}
f(n) &\text{if } n \text{ is prime}, \\
1 &\text{otherwise}.
    \end{cases}$$
Because  $\gamma(n)$ and $\phi(n)/n$   are strongly multiplicative functions, we obtain that
$$
\frac{\gamma(n!)}{\gamma((n-1)!)}=\begin{cases}
n &\text{if } n \text{ is prime}, \\
1 &\text{otherwise},
    \end{cases}
$$
and
$$
\frac{\phi(n!)}{\phi((n-1)!}=\begin{cases}
n-1 &\text{if } n \text{ is prime}, \\
n &\text{otherwise}.
    \end{cases}
$$

Let us now consider the {\it Liouville function}  $\lambda(n):=(-1)^{\Omega(n)}$.
A famous conjecture due to Chowla \cite{chowla} in its simplest form can be stated as follows.
\begin{quote}
{\bf Conjecture (Chowla)} As $N\to \infty$,
$$\sum_{n\le N} \lambda(n)\lambda(n+1) =o(N) .$$
\end{quote}

Interestingly, the analogous form of Chowla's conjecture at factorial arguments is true. Indeed, we have the following result.
\begin{thm} \label{thm:chowla}
As $N\to \infty$,
$$\sum_{n\le N} \lambda(n!)\lambda((n+1)!) =o(N) .$$
\end{thm}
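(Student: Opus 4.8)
The plan is to exploit the complete multiplicativity of $\lambda$, which collapses the product $\lambda(n!)\lambda((n+1)!)$ into a single value of the Liouville function and thereby reduces the whole statement to the classical fact that the summatory function of $\lambda$ is $o(x)$. Since $\Omega$ is completely additive, the Liouville function $\lambda(n)=(-1)^{\Omega(n)}$ is completely multiplicative, so \cref{prop:2} applies and gives $\lambda(n!)=\prod_{k\le n}\lambda(k)$. The key structural observation I would record is the resulting recursion
$$\lambda((n+1)!)=\lambda(n!)\,\lambda(n+1).$$

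With this in hand, the summand simplifies immediately. Using $\lambda(n!)^2=1$, I would write
$$\lambda(n!)\,\lambda((n+1)!)=\lambda(n!)^2\,\lambda(n+1)=\lambda(n+1),$$
so that the entire sum telescopes into a shifted partial sum of the Liouville function:
$$\sum_{n\le x}\lambda(n!)\,\lambda((n+1)!)=\sum_{n\le x}\lambda(n+1)=\sum_{2\le m\le \lfloor x\rfloor+1}\lambda(m).$$

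It then remains to invoke the classical estimate $\sum_{m\le y}\lambda(m)=o(y)$, which is equivalent to the prime number theorem already used throughout \cref{sec:proofs}; this yields the desired $o(x)$ bound at once. There is essentially no obstacle here beyond spotting the multiplicative collapse: once it is noticed, the ``factorial Chowla'' correlation is not a genuine pair correlation at all, but a single Liouville sum in disguise, and the independence heuristic underlying the original conjecture is replaced by an exact algebraic identity. In fact, feeding in the quantitative form of the prime number theorem (as in \eqref{eq:pnt}) in place of the qualitative $o$-statement gives the stronger bound $\sum_{n\le x}\lambda(n!)\,\lambda((n+1)!)=O\big(x\exp(-c\sqrt{\log x})\big)$ for some constant $c>0$, so the factorial analogue is not only true but effectively so.
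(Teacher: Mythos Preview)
Your proof is correct and follows essentially the same route as the paper: both collapse $\lambda(n!)\lambda((n+1)!)$ to a single value $\lambda(n+1)$ via the complete additivity of $\Omega$ (equivalently, the complete multiplicativity of $\lambda$), and then invoke the prime number theorem in the form $\sum_{n\le x}\lambda(n)=o(x)$. Your additional remark about the effective bound is a nice bonus not stated in the paper.
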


\begin{proof}
The proof is quite straightforward and in fact, as we will see, it is a consequence of the prime number theorem.
We will show that
\begin{equation} \label{eq:chowla-1}
\sum_{2\le n \le N} \lambda((n-1)!)\lambda(n!) =o(N) \qquad (N\to \infty).
\end{equation}
Clearly,
\begin{eqnarray*}
\lambda((n-1)!)\lambda(n!) & = & (-1)^{ \sum_{k\le n-1} \Omega(k) +  \sum_{k\le n} \Omega(k)}\\
& = & (-1)^{ 2\sum_{k\le n-1} \Omega(k) +   \Omega(n)}
= (-1)^{\Omega(n)} = \lambda(n).
\end{eqnarray*}
Therefore, in order to prove (\ref{eq:chowla-1}), we only need to prove that
\begin{equation} \label{eq:chowla-2}
\sum_{2\le n\le N} \lambda(n) =o(N) \qquad (N\to \infty).
\end{equation}
Recall that if we let $\mu$ stand for the M\"obius function, then  the prime number theorem implies that
\begin{equation} \label{eq:mux}
\sum_{2\le n\le N} \mu(n) =o(N) \qquad (N\to \infty)
\end{equation}
(see for instance Theorem 5.3 in \cite{DL}). Now, one can easily verify the identity
\begin{equation} \label{eq:ident}
\lambda(n)= \sum_{d^2k=n} \mu(k) \qquad (n\ge 1).
\end{equation}
Setting $M(N):=\sum_{n\le N}\mu(n)$, estimate \eqref{eq:mux} implies that $M(N)=o(N)$ as $N\to \infty$ and therefore that, given any arbitrarily small $\varepsilon>0$, there exists a large number $N_0$ (which we can assume to be sufficiently large so that $1/N_0<\varepsilon/2$) such that
\begin{equation} \label{eq:apply}
M(N)< \frac{\varepsilon}4 N \quad \mbox{  for all }N\ge N_0.
\end{equation}
Then, for any integer  $N>N_0$, in light of \eqref{eq:ident},  we have
\begin{eqnarray} \label{eq:liou} \nonumber
\frac 1N \sum_{n=1}^N \lambda(n) & = &  \frac 1N \sum_{d=1}^N \sum_{1\le k \le N/d^2} \mu(k) =  \frac 1N \sum_{d=1}^N M(N/d^2)\\
& = & \frac 1N \sum_{d=1}^{N_0} M(N/d^2) + \frac 1N \sum_{d=N_0+1}^N M(N/d^2) = S_1 +S_2,
\end{eqnarray}
say. Choosing $N>N_0^3$, for each $d\le N_0$, we have that $\displaystyle{\frac N{d^2} > \frac{N_0^3}{N_0^2} =N_0}$ and we can therefore apply inequality \eqref{eq:apply} and obtain that
\begin{equation} \label{eq:S1}
S_1 < \frac 1N \sum_{d=1}^{N_0}\frac{\varepsilon}4 \frac{N}{d^2} < \frac{\pi^2}6 \frac{\varepsilon}4 < \frac{\varepsilon}2.
\end{equation}
On the other hand, trivially,
\begin{equation} \label{eq:S2}
S_2 \le   \frac 1N \sum_{d=N_0+1}^N \frac N{d^2} < \int_{N_0}^N \frac{dt}{t^2} < \frac 1{N_0} < \frac{\varepsilon}2 .
\end{equation}
Gathering \eqref{eq:S1} and \eqref{eq:S2} in \eqref{eq:liou} proves estimate \eqref{eq:chowla-2} and therefore completes the proof of
Theorem \ref{thm:chowla}.
\end{proof}

\begin{rem} By a similar reasoning, one can show that Chowla's conjecture is in fact equivalent to the statement
$$\sum_{n\leq N}\lambda((n-1)!)\lambda((n+1)!) = o(N) \qquad (N\to\infty).$$
\end{rem}

\begin{rem}
It is clear that one can adapt the proof of Theorem \ref{thm:chowla} to prove that if $(a_n)_{n\ge 1}$ is any sequence of positive integers (not necessarily monotonic), then
\begin{equation} \label{eq:chowla-3}
\sum_{n\le N} \lambda(a_n)\lambda(n a_n) =o(N) \qquad (N\to \infty) .
\end{equation}
Hence, by choosing $a_n=(n-1)!$ in (\ref{eq:chowla-3}), we obtain (\ref{eq:chowla-1}).
On the other hand, by choosing $a_n=n$ and thereafter $a_n=P(n)$, we have
$$ \sum_{n\le N} \lambda(n) \lambda(n^2) =o(N) \qquad (N\to \infty) $$
and
$$ \sum_{n\le N} \lambda(P(n)) \lambda(nP(n)) =o(N) \qquad (N\to \infty). $$
Moreover, according to a more general version of Chowla's conjecture, given any positive integer $k$,
\begin{equation} \label{eq:chowla-general}
\sum_{n\le N} \lambda(n)\lambda(n+1)\cdots \lambda(n+k-1) = o(N) \qquad (N\to \infty).
\end{equation}
It is clear that one can adapt the proof of Theorem \ref{thm:chowla} to prove that if $k$ is an even integer, then the factorial version of (\ref{eq:chowla-general}) holds as well.
\end{rem}

\bibliographystyle{amsplain}

\end{document}